\newcommand{\C}{\mathbb{C}}
\newcommand{\R}{\mathbb{R}}
\newcommand{\Z}{\mathbb{Z}} 
\newcommand{\Q}{\mathbb{Q}}
\newcommand{\HH}{\mathbb{H}}
\def\SL{\mathop{\rm SL}}
\renewcommand{\to}{\longrightarrow}
\def\bz{{\bf z}}
\newtheorem{Thm}{Theorem}[section]
\newtheorem*{theorem*}{Theorem}
\newtheorem*{thm*}{Theorem}
\newtheorem{Lemma}[Thm]{Lemma}
\newtheorem*{dfn}{Definition}
\theoremstyle{definition}
\newtheorem*{defn}{Definition}  
\theoremstyle{remark}
\newtheorem*{rmk}{Remark}
\newtheorem{ind}[]{{\rm\it Indice}}
\renewcommand{\epsilon}{\varepsilon}
\title[Cranks for colored partitions]{Cranks for Ramanujan-type congruences of $k$-colored partitions}
\author[Rolen]{Larry Rolen}
\email{larry.rolen@vanderbilt.edu}
\author[Tripp]{Zack Tripp}
\email{zachary.d.tripp@vanderbilt.edu}
\author[Wagner]{Ian Wagner}
\email{ian.c.wagner@vanderbilt.edu}
\begin{document}
\numberwithin{equation}{section}

\maketitle

{\centering\footnotesize {\it Dedicated to the memory of Freeman Dyson.}\par}

\begin{abstract}
Dyson famously provided combinatorial explanations for Ramanujan's partition congruences modulo $5$ and $7$ via his rank function, and postulated that an invariant explaining all of Ramanujan's congruences modulo $5$, $7$, and $11$ should exist. Garvan and Andrews- Garvan later discovered such an invariant called the crank, fulfilling Dyson's goal. Many further examples of congruences of partition functions are known in the literature. Here, we provide a framework for discovering and proving such invariants for families of congruences. As a first example, we find a family of crank functions that simultaneously explains most known congruences for colored partition functions. The method used, which utilizes Gritsenko, Skoruppa, and Zagier's powerful recent theory of theta blocks, should also be useful for studying other combinatorial functions.
\end{abstract}

\section{Introduction}
Ramanujan famously gave three special congruences for the integer partition function $p(n)$:
 \begin{align*}
 p(5n+4) & \equiv 0 \pmod{5} \\
 p(7n+5) & \equiv 0 \pmod{7} \\
 p(11n+6) & \equiv 0 \pmod{11}.
 \end{align*}
 Ramanujan gave analytic proofs of the first two \cite{Ramanujan}, and Hardy derived proofs of all three from Ramanujan's unpublished work shortly after Ramanujan's passing.
These congruences have inspired a great deal of work by many authors. They are distinguished in that $5, 7,$ and $11$ are the {\it only} primes for which linear congruences of this shape can hold, as postulated by Ramanujan and proven by Ahlgren and Boylan \cite{AhlgrenBoylan}. While many generating function proofs and explanations of these congruences were given over the years, it was not until the work of Dyson that combinatorial explanations were given. Dyson brilliantly defined the {\it rank} of a partition as its largest part minus the number of parts, which can be thought of as a measure of the failure of a partition to be symmetric under conjugation of Ferrers diagrams. He then conjectured that it breaks up partitions of the right size into equinumerous sets modulo $5$ and $7$. For example, modulo $5$ he conjectured there are an equal number of partitions of $5n+4$ with rank congruent to $0, 1, 2, 3,$ or $4$. This was later proven by Atkin and Swinnerton-Dyer \cite{AtkinSD}. The generating function for ranks, and the various properties of them, have had many important applications, and provided early important examples of mock modular forms \cite{BringmannOno,Zagier}. 

However, as Dyson noted, the rank function doesn't explain the Ramanujan congruence modulo $11$. Dyson did, however, conjecture the existence of another function, which he called the {\it crank}, that would interpolate all three Ramanujan congruences simultaneously. Garvan discovered a variant of this function in \cite{Garvan1,Garvan2}, which he and Andrews later used \cite{AndrewsGarvan} to construct Dyson's conjectured crank statistic. We now define the crank as follows. If $\lambda$ is a partition, let $\ell(\lambda)$ be the largest part, and let $\omega(\lambda)$ be the number of $1$'s in the partition. Finally, let $\mu(\lambda)$ be the number of parts of $\lambda$ bigger than $\omega(\lambda)$. Then the crank of $\lambda$ is:
\[
\begin{cases}
\ell(\lambda)& \text{ if } \omega(\lambda)=0,
\\
\mu(\lambda)-\omega(\lambda)& \text{ if } \omega(\lambda)\neq0.
\end{cases}
\]
Although it has a more complicated definition than the rank, Andrews and Garvan showed that it satisfies Dyson's dream of a statistic that splits up partitions in accordance with all three of Ramanujan's congruences. Its generating function is also important, and has the following shape. If $M(m,n)$ is the number of partitions of $n$ with crank $m$ except for $n=1$ where $M(-1,1)=M(0,1)=M(1,1)=1$, then the generating function is
\begin{align}\label{Eqn: crank generating function}
\begin{split}
\mathcal C(z; \tau) := \sum_{m,n} M(m,n) \zeta^m q^n = \prod_{n \geq 1} \frac{1-q^n}{(1- \zeta q^n)(1 -\zeta^{-1} q^n)} = q^{\frac{1}{24}} ( \zeta^{-\frac{1}{2}} - \zeta^{\frac{1}{2}}) \frac{\eta(\tau)^2}{\theta(z;\tau)}.
\end{split}
\end{align}
Here and throughout, we define $q:= e^{2\pi i \tau}$ and $\zeta := e^{2\pi i z}$. We also let $\eta(\tau)$ be the Dedekind eta function
\begin{equation}\label{eta}
\eta(\tau) := q^{1/24} \prod_{n =1}^{\infty} (1-q^n) = \sum_{n =1}^{\infty} \left( \frac{12}{n} \right) q^{n^2 /24}  = \sum_{n \in \Z} (-1)^n q^{\frac{3\left(n-\frac{1}{6} \right)^2}{2}},
\end{equation}
which is a weight $1/2$ modular form with multiplier system we'll denote by $\epsilon$ (for more details, see, for example, Chapter 3 of \cite{Apostol}).  Furthermore, the Jacobi theta function is
\begin{equation}\label{theta}
\theta(z; \tau) := q^{1/8} (\zeta^{1/2} - \zeta^{-1/2}) \prod_{n=1}^{\infty} (1-q^n)(1-\zeta q^n)(1 - \zeta^{-1} q^n) = \sum_{n = - \infty}^{\infty} \left(\frac{-4}{n} \right) \zeta^{n/2} q^{n^2 /8} ,
\end{equation}
and is a Jacobi form of weight $1/2$, index $1/2$ and multiplier system $\epsilon^3$ (for more on Jacobi forms, see the foundational text of Eichler and Zagier \cite{EZ}). 
Thus, the crank generating function is a meromorphic Jacobi form of weight $1/2$ and index $-1/2$. 

We also observe that the equidistribution of the appropriate partitions by crank modulo $5$, $7$, and $11$. We provide an equivalent formulation of this result in terms of divisibility by polynomials (see Lemma~\ref{Lem: equidistribution}).
\begin{Thm}[Andrews-Garvan]\label{Thm: crank equidistribution}
For all $n\geq1$ and $\ell\in\{5,7,11\}$, we have the divisibility relation
\[
\Phi_{\ell}(\zeta)\,\bigg | \,\left[q^{\ell n-b_{\ell}}\right]\mathcal C(z;\tau)
\]
as Laurent polynomials,
where $\Phi_{\ell}(X)$ is the $\ell$-th cyclotomic polynomial, $[q^{m}]F(q)$ denotes the $m$-th Fourier coefficient of $F(q)$, and where $b_{\ell}:=(\ell^2-1)/24$.
\end{Thm}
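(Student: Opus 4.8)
The plan is to recast the claimed cyclotomic divisibility as the vanishing of a weakly holomorphic modular form and then eliminate that form by the valence formula. First I would use Lemma~\ref{Lem: equidistribution}: for a prime $\ell$ the Laurent polynomial $[q^{m}]\mathcal C(z;\tau)\in\Z[\zeta,\zeta^{-1}]$ is divisible by $\Phi_{\ell}(\zeta)$ exactly when it vanishes upon specializing $\zeta$ to a primitive $\ell$-th root of unity $\zeta_{\ell}$ (equivalently, when the $\ell$ ``crank residue sums'' in $q$-degree $m$ all agree). Because the primitive $\ell$-th roots of unity are Galois conjugate, the theorem is thus equivalent to the single assertion that
\[
\mathcal C_{\ell}(\tau):=\mathcal C(\tfrac1\ell;\tau)=\prod_{n\ge1}\frac{1-q^{n}}{(1-\zeta_{\ell}q^{n})(1-\zeta_{\ell}^{-1}q^{n})}=\sum_{n\ge0}c_{n}q^{n}
\]
has $c_{n}=0$ whenever $n\equiv -b_{\ell}\pmod{\ell}$.

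Next I would make modularity explicit via \eqref{Eqn: crank generating function}, which gives $\mathcal C_{\ell}(\tau)=(\zeta_{\ell}^{-1/2}-\zeta_{\ell}^{1/2})\,q^{1/24}\eta(\tau)^{2}/\theta(\tfrac1\ell;\tau)$. Since $\theta(z;\tau)$ vanishes only for $z\in\Z\tau+\Z$, the choice $z=\tfrac1\ell$ keeps $\theta(\tfrac1\ell;\tau)$ zero-free on $\HH$, so $\mathcal C_{\ell}$ is a weakly holomorphic modular form of weight $\tfrac12$ on a congruence subgroup $\Gamma$ of level divisible by $\ell$ (and by $24$, to absorb the $\eta$- and $\theta$-multipliers), with a multiplier system inherited from those of $\eta$ and $\theta$. (As a sanity check on the modularity, the product over the Galois orbit telescopes: $\prod_{a=1}^{(\ell-1)/2}\mathcal C(\tfrac a\ell;\tau)=q^{(\ell-1)/48}\eta(\tau)^{(\ell+1)/2}/\eta(\ell\tau)$.) This is precisely the regime of the Gritsenko--Skoruppa--Zagier theta-block calculus: $\eta^{2}/\theta(\tfrac1\ell;\,\cdot\,)$ is a constituent of a theta block, so its divisor --- in particular the order $\operatorname{ord}_{\mathfrak{a}}\mathcal C_{\ell}$ at each cusp $\mathfrak{a}$ of $\Gamma$ --- is given by a closed formula. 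I would tabulate this finite list of cusp orders: $\operatorname{ord}_{\infty}\mathcal C_{\ell}=0$, and elsewhere the orders are controlled, with genuine poles only at cusps $a/c$ with $\ell\mid c$.

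Then I would apply the arithmetic-progression sieve
\[
G_{\ell}(\tau):=\sum_{n\equiv -b_{\ell}\,(\ell)}c_{n}q^{n}=\frac1\ell\sum_{j=0}^{\ell-1}\zeta_{\ell}^{\,jb_{\ell}}\,\mathcal C_{\ell}\!\left(\tau+\tfrac j\ell\right),
\]
which is again a weakly holomorphic modular form of weight $\tfrac12$, now on a finite-index subgroup $\Gamma'\subseteq\Gamma$; the translations $\tau\mapsto\tau+j/\ell$ permute the cusps, so each $\operatorname{ord}_{\mathfrak{a}}G_{\ell}$ is expressible through the orders found above. Summing these over all cusps of $\Gamma'$ and comparing with the valence formula in weight $\tfrac12$ --- where the total order of a nonzero form equals $\tfrac1{24}[\SL_{2}(\Z):\Gamma']$ --- I expect the bookkeeping to force $\sum_{\mathfrak{a}}\operatorname{ord}_{\mathfrak{a}}G_{\ell}$ to exceed that bound unless $G_{\ell}\equiv0$, which is the theorem. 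Equivalently, $G_{\ell}$ should turn out to be holomorphic and to vanish at every cusp, hence a weight-$\tfrac12$ cusp form, of which there are none; or, most concretely, after multiplying $G_{\ell}$ by a suitable power of $\eta$ to clear any cuspidal poles, Sturm's bound reduces the claim to checking that finitely many explicit crank coefficients vanish in the relevant residue class, which for $\ell\in\{5,7,11\}$ is a direct computation.

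The crux is the middle step: pinning down the exact level and multiplier of $\mathcal C_{\ell}$ and, above all, its cusp divisor, and then tracking how the sieve redistributes those orders among the more numerous cusps of $\Gamma'$. Once that data is in hand the conclusion is formal --- a valence-formula inequality, or equivalently a finite Sturm verification --- and it is the theta-block formalism that renders the divisor computation systematic, which is exactly why the same strategy should extend to the other $k$-colored partition families treated later.
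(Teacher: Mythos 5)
Your reduction steps are fine: by Lemma~\ref{Lem: equidistribution} and Galois conjugacy the theorem is equivalent to the vanishing of the coefficients of $\mathcal C(1/\ell;\tau)$ on the progression $\ell n-b_\ell$, the sieve identity is correct, and (after stripping the non-modular prefactor $q^{1/24}$, which you gloss over) the specialization is indeed a weight-$1/2$ weakly holomorphic form on a congruence subgroup. The genuine gap is the finishing step, which is exactly where the theorem lives. The valence-formula ``bookkeeping'' version cannot work as described: sieving gives no a priori lower bound on the cusp orders of $G_\ell$ beyond its first potentially nonzero term, so $\operatorname{ord}_\infty G_\ell$ could a priori be as small as $\ell-b_\ell$ (e.g.\ $4$ for $\ell=5$), far below any valence bound for the relevant subgroup; the large order of vanishing at $\infty$ is the \emph{conclusion} of the theorem, not something the sieve supplies, so no contradiction arises without verifying coefficients. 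The Sturm-bound version is viable in principle, but it is precisely a finite coefficient verification together with the level/multiplier/cusp-order computations you defer, none of which is carried out; moreover one of your stated inputs is wrong: a short computation with the theta transformation shows that the order of $\eta(\tau)^2/\theta(1/\ell;\tau)$ at a cusp $a/c$ is $-\tfrac1{24}+\tfrac{c_0(\ell-c_0)}{2\ell^2}$ (with $c_0$ the reduction of $c$ modulo $\ell$ into $[0,\ell)$, up to the cusp-width normalization), which is negative for $c_0\in\{1,\ell-1\}$ when $\ell=11$, so poles are \emph{not} confined to cusps with $\ell\mid c$, and this would corrupt your choice of $\eta$-power and the resulting bound.

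For comparison, the paper does not argue analytically at all: the statement is the Andrews--Garvan theorem, and Lemma~\ref{Lem: equidistribution} is used only to show that the cyclotomic-divisibility phrasing restates crank equidistribution; independently, the paper's own machinery (the $k=1$ instance of the method behind Theorem~\ref{MainThm}) gives a purely algebraic $q$-series proof: multiply numerator and denominator by the theta block attached to $h\in\{4,6,10\}$, use \eqref{p} to reduce the denominator modulo $\Phi_\ell(\zeta)$ to a series in $q^\ell$, and read off the vanishing of the relevant numerator coefficients from the quadratic-form exponents on the sum side of the Macdonald-type identity (Lemma~\ref{vanishingcoeffs}) --- no valence formula and no finite computer check. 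If you want to salvage your route, you must actually pin down the group, multiplier, and all cusp orders of $q^{-1/24}\mathcal C(1/\ell;\tau)$ and of the sieved form, compute the resulting Sturm-type bound, and perform (or cite) the finite verification; as written, the decisive vanishing is assumed rather than proven.
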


In this paper, we will extend Andrews' and Garvan's crank function to a natural infinite family using Gritsenko, Skoruppa, and Zagier's theory of {\it theta blocks} \cite{GSZ}.
To describe this, we recall that a {\bf{$k$-colored partition}} of a positive integer $n$ is a $k$-tuple of partitions $(\lambda^{(1)}, \dots, \lambda^{(k)})$ such that $\sum_{i=1}^{k} \left| \lambda^{(i)} \right| = n$.  We let $p_{k}(n)$ denote the number of $k$-colored partitions of $n$.  Then the generating function of $p_{k}(n)$ is given by
 \begin{equation}
P_{k}(\tau):= \sum_{n \geq 0} p_{k}(n) q^n = \prod_{n \geq 1} \frac{1}{(1-q^n)^{k}}.
 \end{equation}
 Of course, we have that $p_1(n)$ is simply the standard partition function $p(n)$. 
 
 Higher colored partition functions also satisfy Ramanujan-type congruences (defined as those where the modulus is a prime which matches the common difference of terms in an arithmetic progression).
One can easily prove congruences for $k$-colored partitions whenever $k \equiv 0, \ell-3, \ell -1 \pmod{\ell}$.  The first congruence follows from working modulo $\ell$ while the the second two come from the Jacobi triple product and Euler's pentagonal number theorem respectively.  However, there are many more congruences that occur at a ``deeper'' level. In particular, we have the  following result of Boylan \cite{Boylan} and Dawsey and the third author \cite{DW}.
\begin{Thm}\label{c}
 Let $k+h=\ell t$ for a prime $\ell$ and positive integers $h$ and $t$ and let $\delta_{k,\ell}$ be such that $24 \delta_{k,\ell} \equiv k \pmod{\ell}$.  
 Then we have the Ramanujan-type congruence 
 \[
 p_k(\ell n +\delta_{k,\ell})\equiv0\pmod\ell
 \]
 if any of the following hold:
 \begin{enumerate}
 \item We have $h \in \{4, 8, 14 \}$ and $\ell \equiv 2 \pmod{3}$.
 \item We have $h \in \{6, 10 \}$ and $\ell \equiv 3 \pmod{4}$.
\item We have $h =26$ and $\ell \equiv 11 \pmod{12}$.
\end{enumerate}
\end{Thm}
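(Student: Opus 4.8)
The plan is to reduce the claimed congruence, by an elementary manipulation modulo $\ell$, to the vanishing modulo $\ell$ of the Fourier coefficients of the eta power $\eta(\tau)^{h}$ along a fixed arithmetic progression, and then to settle that vanishing using the fact that, for each of the six exponents $h\in\{4,6,8,10,14,26\}$, a suitable rescaling of $\eta(\tau)^{h}$ is a linear combination of newforms with complex multiplication.

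First, writing $k=\ell t-h$ we have $P_{k}(\tau)=\prod_{n\geq1}(1-q^{n})^{-k}=\prod_{n\geq1}(1-q^{n})^{h}\cdot\prod_{n\geq1}(1-q^{n})^{-\ell t}$, and since $(1-q^{n})^{\ell}\equiv1-q^{\ell n}\pmod{\ell}$ (so that $\prod_{n\geq1}(1-q^{n})^{-\ell t}\equiv\prod_{n\geq1}(1-q^{\ell n})^{-t}\pmod{\ell}$ in $\Z[[q]]$) we obtain the congruence of $q$-series
\[
P_{k}(\tau)\equiv\Big(\prod_{n\geq1}(1-q^{n})^{h}\Big)\Big(\prod_{n\geq1}(1-q^{\ell n})^{-t}\Big)\pmod{\ell}.
\]
The second factor is a power series in $q^{\ell}$, hence does not mix coefficients lying in distinct residue classes modulo $\ell$; writing $\prod_{n\geq1}(1-q^{n})^{h}=\sum_{N\geq0}b_{h}(N)q^{N}$, it therefore suffices to prove $b_{h}(N)\equiv0\pmod{\ell}$ for every $N\equiv\delta_{k,\ell}\pmod{\ell}$. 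Since $24\delta_{k,\ell}\equiv k\equiv-h\pmod{\ell}$ and $\gcd(\ell,6)=1$ (which also makes $\delta_{k,\ell}$ well defined), this last condition reads $24N+h\equiv0\pmod{\ell}$.

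Now set $d_{h}:=24/\gcd(24,h)\in\{3,4,6,12\}$, so that $\eta(d_{h}\tau)^{h}=\sum_{M}a_{h}(M)q^{M}$ with $a_{h}\big(d_{h}(N+\tfrac{h}{24})\big)=b_{h}(N)$, and $\eta(d_{h}\tau)^{h}$ is a cusp form of weight $h/2$ on $\Gamma_{0}(N_{h})$ with quadratic nebentypus, where $N_{h}$ is divisible only by $2$ and $3$. By Serre's lacunarity theorem the power series $\eta(\tau)^{h}$ is lacunary for these $h$, so $\eta(d_{h}\tau)^{h}$ lies in the span of newforms with complex multiplication (and their $V_{d}$-translates); comparison of $q$-expansions — for $h=4,6,8$ these rescalings are, respectively, the CM newforms of weight $2$ and level $36$, of weight $3$ and level $16$, and of weight $4$ and level $9$, while for $h=10,14,26$ one identifies the CM fields inside a single finite-dimensional space of cusp forms — shows that the relevant imaginary quadratic fields are $\Q(\sqrt{-3})$ for $h\in\{4,8,14\}$, $\Q(i)$ for $h\in\{6,10\}$, and both $\Q(\sqrt{-3})$ and $\Q(i)$ for $h=26$. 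In every case the hypothesis on $\ell$ — that $\ell\equiv2\pmod{3}$, resp.\ $\ell\equiv3\pmod{4}$, resp.\ $\ell\equiv11\pmod{12}$ — says exactly that $\ell$ is inert in each relevant field, and $\gcd(\ell,6)=1$ guarantees $\ell\nmid N_{h}$. The argument then closes with the elementary CM vanishing principle: if $g=\sum a_{g}(m)q^{m}$ is a newform of weight $w\geq2$ and nebentypus $\chi$ with CM by an imaginary quadratic field $K$, and $\ell\nmid N_{g}$ is inert in $K$, then $a_{g}(\ell)=0$, so the Hecke relation $a_{g}(\ell^{j+1})=-\chi(\ell)\ell^{w-1}a_{g}(\ell^{j-1})$ forces $a_{g}(\ell^{j})\equiv0\pmod{\ell}$ for all $j\geq1$ (using $w\geq2$), whence $a_{g}(m)\equiv0\pmod{\ell}$ whenever $\ell\mid m$ by multiplicativity; the same conclusion propagates to the $V_{d}$-translates and $\C$-linear combinations of such newforms making up $\eta(d_{h}\tau)^{h}$. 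Applying this at $m=d_{h}(N+\tfrac{h}{24})$, which is divisible by $\ell$ because $24m=d_{h}(24N+h)\equiv0\pmod{\ell}$ with $\gcd(\ell,24)=1$, gives $b_{h}(N)=a_{h}(m)\equiv0\pmod{\ell}$, as required.

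I expect the genuine obstacle to be the middle step: producing, for each $h$, an explicit description of $\eta(d_{h}\tau)^{h}$ as a combination of CM newforms and pinning down the CM field(s). For $h\in\{4,6,8\}$ this is classical and can be verified directly from Ligozat's criteria together with the one-dimensionality of the relevant spaces; for $h\in\{10,14,26\}$ it rests on Serre's classification of lacunary $\eta$-powers, which both supplies the list $\{1,2,3,4,6,8,10,14,26\}$ and guarantees the CM structure, after which one reads off the field(s) from the support of the $q$-expansion. The case $h=26$ is the only one in which two distinct CM fields appear, which is precisely why the stronger hypothesis $\ell\equiv11\pmod{12}$ — forcing $\ell$ inert in both $\Q(\sqrt{-3})$ and $\Q(i)$ — must be imposed there rather than $\ell\equiv2\pmod{3}$. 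The remaining points — that $d_{h}\in\{3,4,6,12\}$, that the $N_{h}$ are $2$-$3$-smooth, that $\ell\geq5$, and the interpretation of the congruences $\pmod{\ell}$ in a number field containing all the coefficients involved (which is harmless since $b_{h}(N)\in\Z$) — are routine.
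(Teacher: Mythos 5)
Your reduction and overall strategy are sound, but note first that the paper itself does not prove Theorem~\ref{c}: it is quoted from Boylan and from Dawsey--Wagner, so there is no in-paper proof to match. Your first step (writing $P_k\equiv\prod(1-q^n)^h\cdot\prod(1-q^{\ell n})^{-t}\pmod\ell$ and reducing to $\ell\mid b_h(N)$ whenever $24N+h\equiv0\pmod\ell$) is exactly the reduction used in those references. Where you diverge is the finish: the cited proofs for $h\in\{4,6,8,10,14\}$ (and the closely parallel computations in Section~\ref{wt1} of this paper) use the explicit Macdonald-type expansions of $\eta^h$, in which the condition $24N+h\equiv0\pmod\ell$ forces the lattice parameters to satisfy $x_1\equiv x_2\equiv0\pmod\ell$ and each surviving term then visibly carries a product of linear forms divisible by $\ell$; integrality is manifest term by term. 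You instead invoke Serre's lacunarity theorem, the CM-newform structure of $\eta(d_h\tau)^h$, and the Hecke recursion at inert primes. Your route is more uniform -- in particular it treats $h=26$ on the same footing, which is precisely the case where no Macdonald-type identity is known (the reason this paper excludes $h=26$ from its crank results, and essentially how Boylan handles that congruence) -- while the explicit-identity route is more elementary and is the one that extends to the two-variable theta-block statements needed for the cranks in this paper.

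One step in your argument needs to be nailed down. You deduce $\ell\mid b_h(N)$ from $\lambda\mid a_{g_i}(m)$ for a prime $\lambda$ above $\ell$, via $b_h(N)=\sum_i c_i\,a_{g_i}(m)$; this requires the fixed decomposition coefficients $c_i$ to be $\lambda$-integral, and the remark that ``$b_h(N)\in\Z$'' does not supply this -- if some $c_i$ had $\ell$ in its denominator, the sum could be a rational integer not divisible by $\ell$. Since the theorem includes small primes such as $\ell=5$ and $\ell=11$, you must verify for each $h$ that the denominators of the $c_i$ are supported at $2$ and $3$ (true in these six cases), or, cleaner, bypass the issue by using the Hecke-character expression $a_{g}(m)=\sum_{N\mathfrak a=m}\psi(\mathfrak a)$ for the CM coefficients (and its translates), where divisibility by an inert $\ell$ is termwise manifest; this essentially recovers the explicit identities of the cited proofs. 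A last small point: you should state $\ell\geq5$ explicitly (it is implicit in the definition of $\delta_{k,\ell}$ and is what guarantees $\ell\nmid24$ and $\ell\nmid N_h$); as literally written, your case (1) would admit $\ell=2$, where your use of $\gcd(\ell,6)=1$ fails.
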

Boylan also classifies all Ramanujan-type congruences for all odd $k \leq 47$ and points out that there are a few congruences, which he calls superexceptional congruences, that do not fit into the above families.  However, the congruences from $k \equiv 0, \ell-1, \ell-3 \pmod \ell$ and from Theorem~\ref{c} are believed to cover nearly all Ramanujan-type congruences for colored partitions.  As an example, for the primes $5$ and $7$,
we have the following congruences: 
\begin{equation}\label{ExamplesIntro}
\begin{aligned}
p_{5t}(5n + \delta) & \equiv 0 \pmod{5} \qquad 1 \leq \delta \leq 4, \\
p_{5t+1}(5n+4) &\equiv 0 \pmod{5}, \\
p_{5t+2}(5n+\delta) & \equiv 0 \pmod{5} \qquad \delta = 2,3,4, \\
p_{5t+4}(5n + \delta) &\equiv 0 \pmod{5} \qquad \delta = 3,4, \\
p_{7t}(7n + \delta) & \equiv 0 \pmod{7} \qquad 1 \leq \delta \leq 6, \\
p_{7t +1}(7n+5) &\equiv 0 \pmod{7}, \\
p_{7t+4}(7n +\delta) &\equiv 0 \pmod{7} \qquad \delta = 2,4,5,6, \\
p_{7t+6}(7n + \delta) & \equiv 0 \pmod{7} \qquad \delta = 3,4,6, \\
\end{aligned}
\end{equation}

This paper constructs combinatorial interpolants for most of these congruences. To define those which will be suitable for our construction, we set the following terminology. 
\begin{dfn}
Consider an arithmetic progression $\{\ell n+\delta_{k,\ell}\}_{n\in\Z_{\geq0}}$ where there is a congruence on this arithmetic progression for $p_k(n)$ modulo $\ell$ afforded by Theorem~\ref{c}. 
We say that it is a {\bf singular congruence progression for $P_k(\tau)$} if either of the following conditions hold:
\begin{enumerate}
 \item[i).] There is an odd prime $p \equiv 2 \pmod{3}$ such that $p|(k+14)$ and we have $\ell \equiv 2 \pmod{3}$ and $\ell|(k+8)$.
\item[ii).] We have $\ell\equiv11\pmod{12}$ and $\ell|(k+26)$.
\end{enumerate}
If the progression is not singular, we say its a {\bf generic congruence progression for $P_k(\tau)$}. 
\end{dfn}

Our choice of the word ``singular" here is simply to indicate that these are the congruences that are unexplained by our cranks. Here, we define a distinguished infinite family of combinatorial statistics for the generic progressions. These explain many of the congruences for $k$-colored partitions and in a majority of cases explains all of them. In order to obtain our crank generating function, we first define
\begin{equation}\label{general Ck}
\mathcal C_{k}(z_1, z_2, \dots, z_{\frac{k+
\delta_{\text{odd}}(k)}{2}}; \tau) := \mathcal C(0; \tau)^{\frac{k-\delta_{\text{odd}}(k)}{2}} \prod_{i=1}^{\frac{k+\delta_{\text{odd}}(k)}{2}}\mathcal C(z_{i}; \tau),
\end{equation}
where $\delta_{\text{odd}}(k):=(1-(-1)^{k})/2$ is the indicator function for $k$ being odd. We will be interested in the case where our $z_i$'s are specialized to $z_i = a_i z$ for integers $a_{1} > a_{2} > \cdots > a_{\frac{k+\delta_{\text{odd}}(k)}{2}} >0$ and $z\in\C$. Explicitly, the $k$-colored crank generating functions will be of the form
\[
\left(\frac{q^{\frac{1}{24}}}{\eta(\tau)}\right)^{\frac{k-\delta_{\text{odd}}(k)}{2}} \times \prod_{i=1}^{\frac{k+\delta_{\text{odd}}(k)}{2}}\mathcal C(a_{i}z; \tau).
\]
Expanding this using the product formula for the crank generating function given in \eqref{Eqn: crank generating function}, we have the equivalent form
\begin{equation}\label{crankprod}
 \prod_{n \geq 1} \frac{1-\delta_{\text{odd}}(k)q^n}{(1-\zeta^{\pm a_{1}} q^n)(1- \zeta^{\pm a_2} q^n) \cdots (1-\zeta^{\pm a_{\frac{k+\delta_{\text{odd}}(k)}{2}}} q^n )},
\end{equation}
where $(1-\zeta^{\pm a} q^n) = (1-\zeta^a q^n)(1- \zeta^{-a} q^n)$.  These crank functions can be interpreted combinatorially as a weighted sum of cranks of each color in the $k$-colored partition.  In particular, this $k$-colored crank counts the crank of the partition with the first color weighted by $a_{1}$ plus the crank of partition with the second color weighted by $a_{2}$ and so on until the $\frac{k+1}{2}$-th color for $k$ odd and $\frac{k}{2}$-th color for $k$ even.  If $k + 14$ is not divisible by any prime $\ell \equiv 2 \pmod{3}$, then define
\begin{equation}\label{first def}
\mathcal C_{k}(z;\tau) :=\mathcal C_{k}(kz, (k-2)z, \dots, (2-\delta_{\text{odd}}(k))z; \tau).
\end{equation} 
If $k + 14$ is divisible by a prime $\ell \equiv 2 \pmod{3}$, then define
\begin{equation}\label{second def}
\mathcal C_{k}(z;\tau) := \mathcal C_{k}((k+2)z, (k-2)z, \dots, (2-\delta_{\text{odd}}(k))z; \tau).
\end{equation}

Our main result gives many situations in which these crank functions combinatorially interpolate the Ramanujan-type congruences for colored partitions.
\begin{Thm} \label{MainThm}
Assume the notation above of Theorem~\ref{c}, and suppose that $\{\ell n+\delta_{k,\ell}\}_{n\in\Z_{\geq0}}$ is a generic congruence progression for $P_k(\tau)$. Then for $n\geq0$ we have the divisibility relation
\[
\Phi_{\ell}(\zeta)\,\bigg | \,\left[q^{\ell n+\delta_{k,\ell}}\right]\mathcal C_{k}(z;\tau).
\]
\end{Thm}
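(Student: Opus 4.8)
The plan is to reduce the divisibility statement to a statement about vanishing at a primitive $\ell$-th root of unity, and then to exploit the product expansion \eqref{crankprod} together with the eta-quotient structure afforded by the theta block point of view. Concretely, $\Phi_\ell(\zeta)$ divides the Laurent polynomial $[q^{\ell n+\delta_{k,\ell}}]\mathcal C_k(z;\tau)$ if and only if that coefficient vanishes when $\zeta$ is set equal to a primitive $\ell$-th root of unity $\zeta_\ell$. So the first step is: fix $\zeta=\zeta_\ell$, substitute into \eqref{crankprod}, and observe that for each $i$ the factor $(1-\zeta_\ell^{\pm a_i}q^n)$ becomes $(1-q^{n})(1-q^{2n})\cdots$-type data depending only on $\gcd(a_i,\ell)$. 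When $\ell\nmid a_i$, the product $\prod_{n\ge1}(1-\zeta_\ell^{a_i}q^n)(1-\zeta_\ell^{-a_i}q^n)$ over a full residue system telescopes — more precisely, $\prod_{j=1}^{\ell-1}(1-\zeta_\ell^{j}q^n)=\tfrac{1-q^{\ell n}}{1-q^n}$ — and each such pair of factors, suitably grouped, contributes an eta-quotient in $q$ and $q^\ell$ only. The upshot of this first step should be an identity of the form
\[
\mathcal C_k(z;\tau)\Big|_{\zeta=\zeta_\ell}=\frac{(q^\ell;q^\ell)_\infty^{\,A}}{(q;q)_\infty^{\,B}}\cdot(\text{a correction factor})
\]
for explicit integers $A,B$ determined by $k$, the choice \eqref{first def} versus \eqref{second def}, and the multiplicities of residues among the $a_i\bmod\ell$; here $(x;x)_\infty:=\prod_{n\ge1}(1-x^n)$.

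The second step is to identify this specialized function with a known modular object whose Fourier expansion is supported away from the arithmetic progression $\ell n+\delta_{k,\ell}$, or on which Theorem~\ref{c} can be brought to bear directly. The natural candidate is to recognize $\mathcal C_k(z;\tau)|_{\zeta=\zeta_\ell}$, up to an overall power of $q$ and an eta power, as essentially $P_{k'}(\tau)$ or $P_{k'}(\ell\tau)$ for an auxiliary number of colors $k'$ congruent to $k$ modulo $\ell$ (so that $\delta_{k',\ell}=\delta_{k,\ell}$), times a modular factor that is a power series in $q^\ell$. Since multiplying a $q$-series by a power series in $q^\ell$ does not move mass off the progression $\ell n+\delta$ (it only mixes coefficients within residue classes mod $\ell$ in a controlled, triangular way), the congruence $p_{k'}(\ell n+\delta_{k,\ell})\equiv0\pmod\ell$ from Theorem~\ref{c} — which holds precisely because the progression is \emph{generic}, i.e. not singular in the sense of the definition — forces the corresponding coefficients of the specialization to vanish. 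The definition of generic progression is exactly engineered so that the auxiliary $k'$ produced here lands in one of the three families (1)--(3) of Theorem~\ref{c}; the singular cases are excluded precisely because there the natural $k'$ fails this.

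The role of the theta block formalism of \cite{GSZ}, and the reason the two cases \eqref{first def} and \eqref{second def} are split by divisibility of $k+14$ by a prime $\ell\equiv2\pmod3$, is to guarantee that the weights $a_i$ can be chosen so that (a) the specialization at $\zeta_\ell$ has the clean eta-quotient shape above with the \emph{correct} auxiliary color count $k'$, and (b) the resulting object is a genuine (possibly meromorphic, holomorphic after the specialization) modular form rather than a formal series — this is what lets one invoke Theorem~\ref{c}. So the logical skeleton is: \textbf{(i)} reduce to vanishing at $\zeta_\ell$; \textbf{(ii)} compute the specialization explicitly from \eqref{crankprod}, using root-of-unity telescoping of the $(1-\zeta_\ell^{\pm a_i}q^n)$ factors; \textbf{(iii)} match the result with $q^{c}\cdot(\text{series in }q^\ell)\cdot P_{k'}(\tau)$ for an appropriate $k'$ and shift $c$ with $c\equiv\delta_{k,\ell}\pmod\ell$; \textbf{(iv)} apply Theorem~\ref{c} to $k'$ and conclude. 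I expect step (iii) to be the main obstacle: one must bookkeep the eta powers carefully (including the $\delta_{\text{odd}}(k)$ correction factor $\prod(1-q^n)$ in \eqref{crankprod} and the $\mathcal C(0;\tau)$ powers from \eqref{general Ck}), verify that the exponent of $q$ introduced really does sit at $\delta_{k,\ell}$ modulo $\ell$, and — crucially — check that the number of indices $a_i$ divisible by $\ell$ and the multiplicities of the nonzero residues $a_i\bmod\ell$ combine to give an auxiliary $k'$ satisfying the hypotheses of Theorem~\ref{c} exactly when the progression is generic. The arithmetic-progression choices $a_i=(k-2i+2)z$ versus $(k+2)z,(k-2)z,\dots$ are dictated by this constraint, and verifying it for all residues $k\bmod\ell$ and all three families is the heart of the argument.
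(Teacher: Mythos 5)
There is a genuine gap, and it lies at the heart of your steps (iii)--(iv). You propose to deduce the divisibility $\Phi_\ell(\zeta)\mid[q^{\ell n+\delta_{k,\ell}}]\mathcal C_k(z;\tau)$ by matching the specialization $\zeta=\zeta_\ell$ with $q^c\cdot(\text{series in }q^\ell)\cdot P_{k'}(\tau)$ and then invoking the congruence $p_{k'}(\ell n+\delta_{k',\ell})\equiv0\pmod\ell$ of Theorem~\ref{c}. But by Lemma~\ref{Lem: equidistribution}, divisibility by $\Phi_\ell(\zeta)$ is equivalent to the \emph{exact} vanishing of the coefficient at $\zeta=\zeta_\ell$, i.e.\ to exact equidistribution of the weighted crank among residue classes mod $\ell$; this is strictly stronger than a congruence mod $\ell$ (summing the equal class counts \emph{implies} the congruence, not conversely). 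Knowing $p_{k'}(\ell n+\delta)\equiv0\pmod\ell$ and multiplying by a power series in $q^\ell$ with coefficients in $\Z[\zeta_\ell]$ can only ever give information modulo $\ell$, never the identical vanishing over $\Z[\zeta_\ell]$ that the theorem asserts, so no amount of bookkeeping in step (iii) closes the argument along this route. Moreover step (ii) is not correct as stated: for the choices \eqref{first def} and \eqref{second def}, the exponents $\pm a_i$ (together with the zeros from the $\mathcal C(0;\tau)$ factors in \eqref{crankprod}) do \emph{not} form complete residue systems modulo $\ell$, so the specialization at $\zeta_\ell$ does not telescope into an eta quotient in $q$ and $q^\ell$; there are always missing residues (for instance $\pm2,\pm4,\pm6$ in the $h=8$ case).

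The paper's proof supplies exactly the two ingredients your outline lacks. First, one multiplies numerator and denominator by the weight-one theta block $\phi_R$ of Table~\ref{TableThetaBlocks} attached to $h$ (where $k+h=\ell t$); Lemmas~\ref{denomfactors1} and~\ref{denomfactors2} show that for a suitable specialization $\bz=(az,bz)$ its theta factors fill in precisely the missing residues, so that modulo $\Phi_\ell(\zeta)$ the denominator becomes $f(\zeta)$ times a series supported on powers of $q^\ell$; the genericity hypothesis and the dichotomy between \eqref{first def} and \eqref{second def} are exactly what make such a choice of $(a,b)$ possible (the $h=8$ versus $h=14$ conflict), not a device for landing an auxiliary color $k'$ in the families of Theorem~\ref{c}. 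Second, and decisively, the burden then falls on the numerator: Lemma~\ref{vanishingcoeffs} shows that the coefficients of $\phi_R(az,bz;\tau)$ on the progression $\ell n+\delta_{k,\ell}+\tfrac{h}{24}$ vanish identically at $\ell$-th roots of unity. This is proved from the Macdonald-type sum expansions of Theorem~\ref{R2}: the quadratic form condition (e.g.\ $x_1^2+x_1x_2+x_2^2\equiv0\pmod\ell$ with $\ell\equiv2\pmod3$) forces $x_1\equiv x_2\equiv0\pmod\ell$, and then the signed Weyl-group sum in brackets vanishes when $\zeta$ is an $\ell$-th root of unity. Your proposal contains no analogue of this vanishing mechanism, which is the actual source of the exact (not merely mod $\ell$) divisibility.
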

\begin{rmk}
Some analogues of crank functions have been defined before.  For two colors Hammond and Lewis define a statistic called the birank that explains the Ramanujan-type congruences \cite{HL} and Andrews defines a bicrank statistic that explains one such congruence \cite{Andrews}.  In \cite{Garvan3} Garvan presents other statistics for two colors and extends both the Hammond-Lewis birank and Andrews' bicrank to a multirank and multicrank which he then proves explain the Ramanujan-like congruences when $k \equiv -1, -3 \pmod{\ell}$ and $k$ is even. Garvan's functions are related to our functions in these special cases.
\end{rmk}
\begin{rmk}
The definition given in  \eqref{first def} will always explain a congruence coming from $h=8$.  However, if there is a congruence coming from $h=14$ as well our definition can't explain both simultaneously.
\end{rmk}
\begin{rmk}
The proof of this result relies on the repackaging of the Macdonald identities in \cite{MacDonald}.  It is currently still an open problem to find a Macdonald-type identity for the $26$th power of $\eta(\tau)$ which keeps us from proving this result in the $h=26$ case. 
\end{rmk}
\begin{rmk}
At first glance it may appear that Theorem \ref{MainThm} suggests that there is no crank function that describes all congruences when there are, for example, congruences coming from both $h=14$ and $h=8$.  This is not necessarily true.  When the primes in the moduli of some of the congruences are small enough crank functions can be defined that explain these congruences in different ways than the standard method used in this paper.  For example, there are congruences coming from $9 \equiv -8 \pmod{17}$ and $9 \equiv -14 \pmod{23}$, and the crank function
\begin{equation*}
\mathcal C_{9}(9z,7z,6z,4z,z; \tau)
\end{equation*}
explains all of the $9$-colored partition congruences as shown in the example in Section \ref{k=9}. However, such examples seem to be more sporadic and don't naturally fit into the same family. For instance, of the congruences above modulo $5$ and $7$ displayed in \eqref{ExamplesIntro}, $\ell=7$ can't have any singular cases as it is not congruent to $2$ modulo $3$. For $\ell=5$, we have that $\ell\not\equiv3\pmod 4$, and we can't have $5|(5t+j+8)$ unless $j=2$. Thus, in $21$ of the $24$ above families of congruences, all progressions are automatically generic. In the remaining $3$ cases, the congruence is singular if and only if $5t+16$ is divisible by an odd prime $p\equiv2\pmod 3$. 
\end{rmk}

The remainder of the paper is organized as follows. In Section~\ref{Prelim}, we describe the theory of theta blocks. We continue in Section~\ref{wt1} with the description of the specific theta blocks we require, and we note some of their key properties. The proofs of the main results are in Section~\ref{MainThmSec}. We conclude in Section~\ref{FinalSection} with some illustrative examples and discussions.

\section*{Acknowledgements}
The authors thank Amanda Folsom, Michael Mertens, Martin Raum, Ae Ja Yee, and the anonymous referees for useful comments and corrections which improved the paper.

\section{Preliminaries on theta blocks}\label{Prelim}

As mentioned above, the main theoretical tool behind our results is the recent preprint on theta blocks by Gritsenko, Skoruppa, and Zagier \cite{GSZ}. Here, we give a detailed overview of the facets of this theory that we will require. This discussion follows more or less directly from \cite{GSZ}, however, the authors thought this discussion would be useful to the reader to make the paper more self-contained. For the purposes of our proof of Theorem~\ref{MainThm}, it is logically possible to skip this section since the identities arising from Theorem~\ref{R2} are previously known. However, as we will see in Table~\ref{TableThetaBlocks}, the theory of theta blocks is what informs our choice of $q$-series identity used to prove the equidistribution of our crank. Without this theory the authors would not have been able to identify the crank functions defined in this paper. Moreover, our use of theta blocks points to a large theory that can help explain other well-known congruences. See Section~\ref{examples} for details and some examples.

We first recall the notion of Jacobi forms, two variable functions which have a mixture of the properties of modular forms and elliptic functions. Our generating functions for combinatorial cranks will be Jacobi forms.

\begin{defn}
For an integral lattice $\underline{L}=(L,\beta)$ equipped with a symmetric non-degenerate bilinear form $\beta$, a {\bf{Jacobi form of weight $k$, lattice index $\underline{L}$ and character $\epsilon^h$}} is a holomorphic function $\phi(\bz; \tau)$ with $\tau \in \HH$ and $\bz \in \C \otimes L$ which satisfies the following properties:
\begin{enumerate}
\item For all $A = \begin{pmatrix} a & b \\ c & d \end{pmatrix} \in \SL_{2}(\Z)$, we have the modularity transformation
\[
\phi \left( \frac{\bz}{c \tau +d}; A \tau \right) = e \left( \frac{c \beta(\bz)}{c \tau +d} \right) (c \tau + d)^{k - \frac{h}{2}} \epsilon^{h}(A) \phi(\bz; \tau).
\]
\item For all $x, y \in L$, we have the elliptic transformation
\[
\phi(\bz+x \tau + y; \tau) = e(\beta(x+y) - \tau \beta(x) - \beta(x,\bz)) \phi(\bz; \tau).
\]
\item The Fourier expansion of $\phi$ is of the form
\[
\phi(\bz; \tau) = \sum_{n \in \frac{h}{24} + \Z} \sum_{\substack{r \in L^{\bullet} \\ n \geq \beta(r)}} c(n,r) e(\beta(r,\bz)) q^n.
\]
\end{enumerate}
The space of Jacobi forms of weight $k$, index $\underline{L}$ and character $\epsilon^h$ is denoted by $J_{k, \underline{L}}(\epsilon^h)$. We refer the reader to Eichler-Zagier's foundational text \cite{EZ} for more details on their general theory and to \cite{S} for more details on lattice index Jacobi forms. 

For an integer $a$ and a Jacobi form $\phi(\bz; \tau)$ we will let $\phi_{a}$ denote the rescaled function $\phi(a\bz; \tau)$.
Suppose that $\underline{L}=(L,\beta)$ is an integral lattice of rank $n$ where $\beta\colon L \times L \to \Z$ is a symmetric non-degenerate bilinear form.  If $U$ is a $\Z$-submodule of full rank in $\Q \otimes L$ denote the dual subgroup by $U^{\#} := \{ y \in \Q \otimes L: \beta(y,x) \in \Z \ \forall x \in U \}$.  Let $\beta(x) := \frac{1}{2} \beta(x,x)$. Note that $\beta(x)$ is not necessarily integral.  If it is, then $\underline{L}$ is called {\it{even}} and is otherwise called {\it{odd}}.  In any case, the map $x \mapsto \beta(x)$ defines an element of order at most $2$ in the dual group ${\rm{Hom}}(L, \Q/\Z)$ of $L$ that is trivial on $2L$.  The kernel $L_{ev}$ of this homomorphism defines an even sublattice of index at most $2$ in $L$.  Since $\beta$ is non-degenerate, there exists an element $r \in \Q \otimes L$ such that $\beta(x) \equiv \beta(r,x) \pmod{\Z}$ for $x \in L$.  The {\it{shadow}} of $L$ is defined by
\begin{equation*}
L^{\bullet} := \{ r \in \Q \otimes L: \beta(x) \equiv \beta(r,x) \pmod{\Z} \ \text{for all} \ x \in L \}.
\end{equation*}    
If $L$ is even, then $L^{\bullet} = L^{\#}$ and if $L$ is odd, then $L_{ev}^{\#} = L^{\#} \cup L^{\bullet}$.  Further let $k$ and $h$ be rational numbers such that $k \equiv \frac{h}{2} \pmod{\Z}$ and let $\epsilon^h$ be the character of $\eta(\tau)^h$.  
\end{defn}
Suppose that $\alpha : \underline{L} \to \underline{M}$ is an isometric embedding. Then the application \[\alpha^{*} \phi(\bz; \tau) := \phi (\alpha \bz; \tau)\] defines a map
\[
\alpha^{*} : J_{k, \underline{M}}(\epsilon^h) \to J_{k, \underline{L}}(\epsilon^h).
\]
There are two embeddings of particular interest.  The first is when a lattice $\underline{L}$ can be isometrically embedded into $\underline{\Z}^{N} := (\Z^{N}, \cdot)$ where $\cdot$ denotes the standard scalar product.  If $\alpha_{j}$ are the coordinate functions of this embedding so that $\beta(x,x) = \sum_{j} \alpha_{j}(x)^{2}$, then
\begin{equation*}
\prod_{j=1}^{N} \theta( \alpha_{j}(\bz); \tau) \in J_{\frac{N}{2}, \underline{L}}( \epsilon^{3N}).
\end{equation*}
Conversely, if such a product defines a Jacobi form of index $\underline{L}$, then we must have $\beta(x,x) = \sum_{j} \alpha_{j}(x)^2$ and the $\alpha_{j}$ define an isometric embedding of $\underline{L}$ into $\underline{\Z}^{N}$. 

The other interesting embedding is of the form
\begin{align*}
s_{x}: \left( \Z, (u,v) \mapsto muv \right) & \to \underline{L} \\
s_{x}(u) & \mapsto ux 
\end{align*}
where $x$ is a non-zero element in $L$ with $m= \beta(x,x)$.  For any $w\in\C$, this gives a map
\begin{align*}
s_{x}^{*}: J_{k, \underline{L}}(\epsilon^h) & \to J_{k, \frac{m}{2}}(\epsilon^h) \\
\phi(\bz; \tau) & \mapsto \phi(x w; \tau).
\end{align*}

\noindent If $\phi(\bz;\tau)$ is a Jacobi form, then $\phi(\bz;\tau)/\eta(\tau)$ is again a Jacobi form, provided that condition (3) is satisfied.

Another key concept is that of a {\it{eutactic star of rank $N$}} on a lattice $\underline{L}=(L, \beta)$, defined as a family $s$ of non-zero vectors $s_{j} \in L^{\#}$ ($1 \leq j \leq N$) such that
\begin{equation*}
x = \sum_{j=1}^{N} \beta(s_{j}, x)s_{j}
\end{equation*}
for all $x \in \Q \otimes L$.  For a eutactic star $s$, one has
\begin{equation*}
\beta(x,x) = \sum_{j} \beta(s_{j}, x)^2
\end{equation*}
for all $x$, so the map $x \mapsto \left( \beta(s_{j}, x) \right)_{1 \leq j \leq N}$ defines an isometric embedding $\alpha_{s}: \underline{L} \to \underline{\Z}^N$.  Conversely, if $\alpha$ is such an embedding then there exist vectors $s_{j}$ such that the $j$th coordinate funciton of $\alpha$ is given by $\beta(s_{j}, x)$ and the family $s_{j}$ is a eutactic star.

Now let $G$ be a subgroup of the orthogonal group $O(\underline{L})$ that leaves $s$ invariant up to signs, i.e., for each $g \in G$ there exists a permutation $\sigma$ of the indices $1 \leq j \leq N$ and signs $\varepsilon_{j} \in \{ \pm 1 \}$ such that $gs_{j} = \varepsilon_{j} s_{\sigma(j)}$ for all $j$.  We set
\begin{equation*}
{\rm{sn}}(g) := \prod_{j} \varepsilon_{j}.
\end{equation*}
It follows that $g \mapsto {\rm{sn}}(g)$ defines a linear character ${\rm{sn}}\colon G \longrightarrow \{ \pm 1 \}$.  The group $G$ acts naturally on $L^{\bullet}/L_{ev}$.  We call the eutactic star $s$ {\it{$G$-extremal on $\underline{L}$}} if there is exactly one $G$-orbit in $L^{\bullet}/L_{ev}$ whose elements have their stabilizers in the kernel of ${\rm{sn}}$. The authors of \cite{GSZ} then proved the following.
\begin{Thm} \label{L}
Let $\underline{L}=(L, \beta)$ be an integral lattice of rank $n$, let $s$ be a $G$-extremal eutactic star of rank $N$ on $\underline{L}$.  Then there is a constant $\gamma$ and a vector $w \in L^{\bullet}$ such that
\begin{equation}
\eta(\tau)^{n-N} \prod_{j=1}^{N} \theta(\beta(s_{j}, \bz); \tau) = \gamma \sum_{x \in w + L_{ev}} q^{\beta(x)} \sum_{g \in G} {\rm{sn}}(g) e(\beta(gx, \bz)).
\end{equation}
In particular, the product on the left defines an element of $J_{\frac{n}{2}, \underline{L}}(\epsilon^{n+2N})$.
\end{Thm}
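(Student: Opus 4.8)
My plan is to argue along the lines of \cite{GSZ}. Write $\phi(\bz;\tau):=\eta(\tau)^{n-N}\prod_{j=1}^{N}\theta(\beta(s_j,\bz);\tau)$ and $\zeta_j:=e(\beta(s_j,\bz))$. The strategy has three parts: (a) show $\phi$ is a holomorphic Jacobi form of weight $n/2$, which is exactly half the rank of the index lattice $\underline L$ (``singular weight''); (b) deduce that $\phi$ is a \emph{finite} $\C$-linear combination of the elementary index-$\underline L$ theta functions; (c) use $G$-skew-invariance and the $G$-extremal hypothesis to collapse that combination onto a single $G$-orbit, which is precisely the claimed right-hand side. For (a): since $s$ is a eutactic star, the map $\alpha_s\colon x\mapsto(\beta(s_j,x))_{1\le j\le N}$ is an isometric embedding $\underline L\hookrightarrow\underline\Z^{N}$, so by the fact recalled above $\prod_{j=1}^{N}\theta(\beta(s_j,\bz);\tau)\in J_{N/2,\underline L}(\epsilon^{3N})$. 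Multiplying by $\eta^{n-N}$, of weight $(n-N)/2$ and character $\epsilon^{n-N}$, produces a function of index $\underline L$ satisfying axioms (1) and (2) of a Jacobi form of weight $n/2$ and character $\epsilon^{3N+(n-N)}=\epsilon^{n+2N}$. Holomorphy on $\HH\times(\C\otimes L)$ is immediate from \eqref{eta} and \eqref{theta}, which give
\[
\phi(\bz;\tau)=q^{\frac{n+2N}{24}}\Big(\prod_{j=1}^{N}(\zeta_j^{1/2}-\zeta_j^{-1/2})\Big)\prod_{m\ge 1}(1-q^m)^{n}\prod_{j=1}^{N}\prod_{m\ge1}(1-\zeta_jq^m)(1-\zeta_j^{-1}q^m);
\]
in particular $\phi\not\equiv 0$. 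Axiom (3) will be recovered at the very end from the identity itself.

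For (b), holomorphy together with the elliptic transformation (2) yields a theta expansion $\phi(\bz;\tau)=\sum_{\mu\in L^{\bullet}/L_{ev}}h_\mu(\tau)\,\vartheta_{\underline L,\mu}(\bz;\tau)$, where $\vartheta_{\underline L,\mu}(\bz;\tau):=\sum_{x\in\mu+L_{ev}}q^{\beta(x)}e(\beta(x,\bz))$ and each $h_\mu$ is holomorphic on $\HH$. Axiom (1) forces $(h_\mu)_\mu$ to be a vector-valued modular form of weight $0$ (the weight $n/2$ of $\phi$ minus $n/2$, half the rank of $\underline L$) for the relevant dual Weil representation; controlling its behaviour at the cusp via the leading term of the $q$-expansion above and the $\SL_2(\Z)$-action, one concludes that each $h_\mu$ is constant, say $h_\mu=c_\mu\in\C$. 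Hence $\phi=\sum_{\mu}c_\mu\vartheta_{\underline L,\mu}$, and the $\vartheta_{\underline L,\mu}$ are linearly independent over $\C$ since their $\bz$-Fourier supports lie in distinct cosets of $L_{ev}$.

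For (c): each $g\in G$ sends the family $(s_j)$ to itself up to a permutation $\sigma$ and signs $\varepsilon_j$ with $\prod_j\varepsilon_j={\rm{sn}}(g)$; using $\theta(-z;\tau)=-\theta(z;\tau)$ and $\beta(s_j,g\bz)=\beta(g^{-1}s_j,\bz)$, this gives $\phi(g\bz;\tau)={\rm{sn}}(g)\,\phi(\bz;\tau)$. Since $g\in O(\underline L)$ preserves $\beta$, $L$, and $L_{ev}$, one has $\vartheta_{\underline L,\mu}(g\bz;\tau)=\vartheta_{\underline L,g^{-1}\mu}(\bz;\tau)$, so comparing theta-coefficients gives $c_{g\mu}={\rm{sn}}(g)\,c_\mu$ for all $g,\mu$. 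Therefore $c_\mu=0$ whenever $\mu$ is fixed by some $g$ with ${\rm{sn}}(g)=-1$, and by $G$-extremality the support of $(c_\mu)$ lies in the unique distinguished $G$-orbit. Choosing a representative $w\in L^{\bullet}$ of it and setting $\gamma:=c_w$ (well defined because ${\rm{Stab}}(w)\subseteq\ker{\rm{sn}}$, and nonzero since $\phi\not\equiv0$), we get $c_{gw}={\rm{sn}}(g)\gamma$. Substituting back, re-indexing $x=gy$ inside each $\vartheta_{\underline L,gw}$, interchanging the two summations, and absorbing $|{\rm{Stab}}(w)|$ into $\gamma$ produces
\[
\phi(\bz;\tau)=\gamma\sum_{x\in w+L_{ev}}q^{\beta(x)}\sum_{g\in G}{\rm{sn}}(g)\,e(\beta(gx,\bz)),
\]
which is the asserted identity. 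As every $e(\beta(r,\bz))$ on the right is paired only with $q^{\beta(r)}$, axiom (3) now holds (indeed with equality $n=\beta(r)$), so $\phi\in J_{n/2,\underline L}(\epsilon^{n+2N})$, establishing the ``in particular'' clause.

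I expect the delicate point to be the claim in (b) that the components $h_\mu$ are holomorphic at the cusp — equivalently, that $\phi$ satisfies the growth condition (3) even though the exponent $n-N$ on $\eta$ may be negative. Ruling out a pole of the weight-$0$ form $(h_\mu)_\mu$ at a cusp is exactly where one must use the precise combinatorics of the star $s$ and the $G$-extremal hypothesis; once that is secured, the rigidity of singular weight takes over and the remainder is the formal bookkeeping indicated above.
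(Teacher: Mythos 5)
First, note that the paper does not prove Theorem~\ref{L} at all: it is quoted from \cite{GSZ} (and, as remarked after Theorem~\ref{R2}, these identities repackage the Macdonald identities of \cite{MacDonald}), so there is no in-paper argument to match yours against; your proposal has to stand on its own.

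Judged that way, your architecture (theta decomposition, singular-weight rigidity, then $G$-skew-symmetrization plus extremality to collapse onto one orbit) is the right shape, and your steps (a) and (c) are essentially correct: the weight, character $\epsilon^{3N}\cdot\epsilon^{\,n-N}=\epsilon^{\,n+2N}$, the relation $\phi(g\bz;\tau)={\rm{sn}}(g)\phi(\bz;\tau)$, the identity $c_{g\mu}={\rm{sn}}(g)c_\mu$, and the bookkeeping with ${\rm{Stab}}(w)$ all check out. The genuine gap is the one you yourself flag and then defer: the constancy of the theta coefficients $h_\mu$ in step (b). Weight-$0$ rigidity only applies if $(h_\mu)_\mu$ is holomorphic at the cusp, i.e.\ if $\phi$ satisfies condition (3) even when $N>n$ and $\eta^{\,n-N}$ contributes the negative power $q^{(n-N)/24}$. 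Concretely, writing $\prod_j\theta(\beta(s_j,\bz);\tau)=\sum_\mu g_\mu(\tau)\vartheta_{\underline L,\mu}(\bz;\tau)$ (this product genuinely is a holomorphic Jacobi form of weight $N/2$, since it is the pullback along $\alpha_s$ of the singular-weight form on $\underline\Z^N$), the order of vanishing of $g_\mu$ at $i\infty$ is the minimum of $\beta(\rho^{\perp})$ over vectors $\rho$ in the relevant shifted copy of $\Z^N$ whose orthogonal projection to $\Q\otimes L$ lies in $\mu+L_{ev}$; you need this minimum to be at least $(N-n)/24$, with equality on the surviving orbit, and nothing in your write-up produces that bound. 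Moreover, your guess that the $G$-extremal hypothesis is what rules out the pole is not substantiated and is, as far as the stated definitions go, misplaced: extremality is purely an orbit/stabilizer condition on $L^{\bullet}/L_{ev}$ and is fully consumed by your step (c); the cusp bound needs a separate lattice-geometric or divisor-comparison input (this is exactly the hard content of the Macdonald identities). Also, deriving axiom (3) ``at the very end from the identity itself'' is circular as written, since the identity is only obtained after the constancy you have not established. Until the cusp estimate is supplied, the proof is incomplete.
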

Note that we can always choose $w = \frac{1}{2} \sum_{j} \varepsilon_{j} s_{j}$.  
Many of the interesting examples of Theorem \ref{L} come from lattices constructed from root systems.  The following theorem of \cite{GSZ} shows how to construct a theta block from a root system.
\begin{Thm} \label{R1}
Let $R$ be a root system of dimension $n$, let $R^{+}$ be a system of positive roots of $R$ and let $F$ denote the subset of simple roots in $R^{+}$.  For $r \in R^+$ and $f \in F$, let $\gamma_{r,f}$ be the (non-negative) integers such that $r = \sum_{f \in F} \gamma_{r,f} f$.  The function
\begin{equation*}
\theta_{R}(\bz; \tau) := \eta(\tau)^{n - |R^+ |} \prod_{r \in R^+} \theta \left( \sum_{f \in F} \gamma_{r,f} z_{f} ; \tau \right)
\end{equation*}
($\tau \in \HH, \bz = \{z_{f} \}_{f \in F} \in \C^{F}$) defines a Jacobi form in $J_{\frac{n}{2}, \underline{R}}(\epsilon^{n +2N})$, where the lattice $\underline{R}$ is $\Z^{F}$ equipped with the quadratic form $Q(\bz) := \frac{1}{2} \sum_{r \in R^+} \left(\sum_{f} \gamma_{r,f} z_{f} \right)^2$.
\end{Thm}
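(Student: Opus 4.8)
The plan is to obtain Theorem~\ref{R1} as a special case of Theorem~\ref{L}, taking $\underline{L}=\underline{R}$, the eutactic star $s$ to be the system of positive roots, and $G=W(R)$ the Weyl group. I would first reduce to $R$ irreducible: if $R=R_1\oplus R_2$ then $R^{+}=R_1^{+}\sqcup R_2^{+}$ with no root mixing the two summands, so $F=F_1\sqcup F_2$, $\underline{R}=\underline{R_1}\oplus\underline{R_2}$ orthogonally, $n-|R^{+}|=(n_1-|R_1^{+}|)+(n_2-|R_2^{+}|)$, and $\theta_R(\bz;\tau)=\theta_{R_1}(\bz_1;\tau)\,\theta_{R_2}(\bz_2;\tau)$. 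Since the external product of Jacobi forms of weights $k_i$, lattice indices $\underline{L_i}$ and characters $\epsilon^{h_i}$ lies in $J_{k_1+k_2,\,\underline{L_1}\oplus\underline{L_2}}(\epsilon^{h_1+h_2})$ — all three defining properties being additive under orthogonal direct sum — it suffices to treat irreducible $R$.

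So assume $R$ irreducible of rank $n$. By its very definition the lattice $\underline{R}=(\Z^{F},Q)$ is the pullback of $\underline{\Z}^{|R^{+}|}$ along $\iota\colon\bz\mapsto\bigl(\sum_{f\in F}\gamma_{r,f}z_f\bigr)_{r\in R^{+}}$, and $\iota$ takes integral values because every positive root is a non-negative integral combination of the simple roots; hence $\iota$ is an isometric embedding, and by the embedding/eutactic-star dictionary recalled above its coordinate functionals are $\beta(s_r,\cdot)$ for vectors $s_r\in\underline{R}^{\#}$ which form a eutactic star $s$ of rank $N:=|R^{+}|$. The Weyl group $W$ preserves the root lattice $\Z^{F}$ and the form $Q$ (which is $W$-invariant, as $W$ permutes $R^{+}$ up to signs), so $W\le O(\underline{R})$; and for $w\in W$, $r\in R^{+}$ one has $wr=\varepsilon_r\,r'$ with $r'\in R^{+}$ and $\varepsilon_r\in\{\pm1\}$, so $w$ permutes $\{s_r\}_{r\in R^{+}}$ up to these signs. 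Thus $\mathrm{sn}$ is defined, and counting positive roots sent to negative roots gives $\mathrm{sn}(w)=\prod_{r\in R^{+}}\varepsilon_r=(-1)^{\ell(w)}=\det w$, so $\ker(\mathrm{sn})$ is the rotation subgroup $W^{+}$.

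It remains to verify that $s$ is \emph{$W$-extremal} on $\underline{R}$, i.e.\ that $L^{\bullet}/L_{ev}$ contains exactly one $W$-orbit all of whose elements have stabilizer contained in $W^{+}$; this is the step I expect to be the main obstacle. Here $L^{\bullet}$ is the shadow and $L_{ev}$ the even sublattice of the root lattice, and the canonical vector $w=\tfrac12\sum_{r\in R^{+}}s_r$ from the remark after Theorem~\ref{L} — the half-sum of positive roots, i.e.\ (an avatar of) the Weyl vector $\rho$ — should pin down the distinguished orbit. I would carry this out by passing through the classification of irreducible root systems and checking each type, using transitivity of $W$ on roots of a given length and on minuscule coweights, and the fact that the stabilizer of a minuscule class modulo $L_{ev}$ is generated by reflections each of which moves that class, hence meets the complement of $W^{+}$ — or, preferably, by finding a uniform argument phrased in terms of minuscule coweights and $\rho$.

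Granting $W$-extremality, Theorem~\ref{L} applies verbatim and yields
\[
\theta_R(\bz;\tau)=\eta(\tau)^{\,n-N}\prod_{r\in R^{+}}\theta\left(\sum_{f\in F}\gamma_{r,f}z_f;\tau\right)\in J_{\frac n2,\,\underline{R}}\bigl(\epsilon^{n+2N}\bigr),
\]
which is exactly the assertion of Theorem~\ref{R1} (so the ``$N$'' appearing in the statement is $|R^{+}|$, consistent with the character count $\epsilon^{3N}\cdot\epsilon^{\,n-N}=\epsilon^{n+2N}$ coming from the $N$ theta factors and $\eta(\tau)^{n-N}$). Property~(3) in the definition of a Jacobi form needs no separate verification, since the right-hand side of Theorem~\ref{L} is a finite $\Z$-linear combination of terms $q^{\beta(x)}e(\beta(gx,\bz))$ with $\beta(gx)=\beta(x)$, hence is supported on pairs $(\nu,r)$ with $\nu=\beta(r)$ and $\nu\in\tfrac{h}{24}+\Z$ for $h=n+2N$. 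Multiplying the irreducible pieces then recovers the general case.
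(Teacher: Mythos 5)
Your skeleton is the same as the one the paper (following Gritsenko--Skoruppa--Zagier) uses: view $\theta_R$ as the specialization of Theorem~\ref{L} in which the lattice is the one attached to $R$, the eutactic star is $R^+$, and $G$ is the Weyl group; your reduction to irreducible $R$ by orthogonal direct sums, the identification $\mathrm{sn}(w)=\det w$, the character count $\epsilon^{3N}\cdot\epsilon^{\,n-N}=\epsilon^{n+2N}$, and reading condition (3) off the sum side are all fine (two small slips: the sum in Theorem~\ref{L} is infinite, not a ``finite $\Z$-linear combination,'' though it is finite in each power of $q$; and the lattice underlying $\underline{R}$ in Theorem~\ref{R2} is the rescaled lattice $W_R$, not the root lattice, so the $W$-action on $\Z^F$ is the one transported via $z_f=(f,\bz)/\mathfrak{h}$). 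The genuine gap is exactly the step you flag and do not carry out: the $G_R$-extremality of the star $R^+$, i.e.\ that $L^{\bullet}/L_{ev}$ contains precisely one $G_R$-orbit whose stabilizers lie in $\ker(\mathrm{sn})$. This is not formal bookkeeping -- in this packaging it \emph{is} the Macdonald identity for $R$ -- and your proposal offers only two untested strategies (a type-by-type check through the classification, or a hoped-for uniform argument via minuscule coweights and $\rho$); the claim that the relevant stabilizers are generated by reflections moving the class is asserted, not proved, and determining $L^{\bullet}/L_{ev}$ and its orbit structure for each type (including the odd lattices that occur, e.g.\ already for $A_1$) is where all the work lies. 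As written, the proof is therefore incomplete at its crux.

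For comparison, the paper does not prove this step either: Theorem~\ref{R2} is quoted from \cite{GSZ}, the extremality of $R^+$ is stated there with ``it can be shown,'' and Theorem~\ref{R1} is then obtained from Theorem~\ref{R2} by the coordinate change $z_f=(f,\bz)/\mathfrak{h}$ (together with multiplying over irreducible components), which is the ``weaker form'' remark following Theorem~\ref{R2}. So the efficient repair within this paper's framework is to invoke Theorem~\ref{R2} (equivalently the Macdonald identities of \cite{MacDonald}) for each irreducible summand and keep your direct-sum, coordinate-change, and character arguments unchanged; if you instead insist on establishing extremality from scratch, that is a substantial independent project amounting to a re-proof of Macdonald's identities, and it would need to be written out case by case or with a genuinely uniform argument before your proposal counts as a proof.
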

Computing the sum side of a theta block can be simplified when the lattice comes from a root system. This computation requires a few preliminary definitions.  Let $R$ be an irreducible root system of dimension $n$, let $N = |R^+ |$, and let $E_{R}$ be the ambient Euclidean space of $R$ with inner product $( \cdot, \cdot)$.  Define the number 
\begin{equation*}
\mathfrak{h} := \frac{1}{n} \sum_{r \in R^+} (r,r),
\end{equation*}
let $W_{R}$ be the lattice
\begin{equation*}
W_{R} := \left\{ x \in E_{R}: \frac{(x,r)}{\mathfrak{h}} \in \Z \ \text{for all} \ r \in R \right\},
\end{equation*}
and set
\begin{equation*}
\underline{R} := \left(W_{R}, \frac{(\cdot, \cdot)}{\mathfrak{h}} \right).
\end{equation*}
The dual lattice $W_{R}^{\#}$ equals the lattice spanned by the roots $r \in R$.  Let $G_{R}$ be the Weyl group of $R$.  Then it can be shown that the eutactic star $R^+$ on $\underline{R}$ is extremal with respect to the Weyl group $G_{R}$.  The Weyl group is generated by the reflections about the hyperplane perpendicular to a root $r \in R$ for each $r$.  Explicitly, it is generated by all the $s_{r}$, given by
\[
s_{r}(v) = v - 2 \frac{(r,v)}{(r,r)} r,
\]
where $(\cdot, \cdot)$ is the inner product of $E_{R}$. Following \cite{GSZ}, we can now state the equivalent of Theorem \ref{L} specifically for root systems.
\begin{Thm} \label{R2}
Let $R$ be an irreducible root system with a choice of positive roots $R^+$, and let $w = \frac{1}{2} \sum_{r \in R^+} r$.  Then we have
\[
\theta_{R}(\bz; \tau) := \eta(\tau)^{n-N} \prod_{r \in R^+} \theta \left( \frac{(r,\bz)}{\mathfrak{h}}; \tau \right) = \sum_{x \in w + W_{R,ev}} q^{\frac{(x,x)}{2h}} \sum_{g \in G_{R}} {\rm{sn}}(g) e \left( \frac{(gx, \bz)}{\mathfrak{h}} \right)
\]
for all $\tau \in \HH$ and $z \in \C \otimes W_{R}$.  In particular, $\theta_{R}$ is a holomorphic Jacobi form in $J_{\frac{n}{2}, \underline{R}}(\epsilon^{n + 2N})$.
\end{Thm}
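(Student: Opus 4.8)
The plan is to deduce Theorem~\ref{R2} from Theorem~\ref{L} by exhibiting $R^{+}$ as a $G_{R}$-extremal eutactic star on $\underline{R}=\bigl(W_{R},(\cdot,\cdot)/\mathfrak{h}\bigr)$, and then to make both sides of Theorem~\ref{L} explicit and determine the multiplicative constant. First I would set up the lattice data: $\underline{R}$ is an integral lattice because $W_{R}$ pairs integrally with every root by definition and, as recorded above, its dual is the root lattice $W_{R}^{\#}=\sum_{r\in R}\Z r$; by reflexivity of the dual $W_{R}\subseteq W_{R}^{\#}$, so $(\cdot,\cdot)/\mathfrak{h}$ is $\Z$-valued on $W_{R}$. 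Moreover every positive root lies in $L^{\#}=W_{R}^{\#}$, as required of the vectors in a eutactic star.

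The core point is that $s=(r)_{r\in R^{+}}$ really is a eutactic star, i.e.\ $x=\sum_{r\in R^{+}}\frac{(r,x)}{\mathfrak{h}}\,r$ for all $x\in E_{R}$. This follows from Schur's lemma: the self-adjoint operator $T\colon x\mapsto\sum_{r\in R}(r,x)\,r$ commutes with the $G_{R}$-action, so its eigenspaces are $G_{R}$-invariant; by irreducibility of the reflection representation $T=\lambda\,\mathrm{Id}$, and taking traces gives $\lambda\,n=\sum_{r\in R}(r,r)=2n\mathfrak{h}$, hence $\lambda=2\mathfrak{h}$; dividing $\sum_{r\in R}(r,x)r=2\mathfrak{h}\,x$ by $2$ and then by $\mathfrak{h}$ yields the eutactic relation. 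The isometry property $\frac{(x,x)}{\mathfrak{h}}=\sum_{r\in R^{+}}\frac{(r,x)^{2}}{\mathfrak{h}^{2}}$ is then automatic, and the associated embedding $x\mapsto\bigl((r,x)/\mathfrak{h}\bigr)_{r\in R^{+}}$ lands in $\Z^{N}$ precisely by the definition of $W_{R}$. The Weyl group $G_{R}$ acts on $\underline{R}$ by isometries and permutes $R$, hence permutes $R^{+}$ up to signs; the resulting character $\mathrm{sn}\colon G_{R}\to\{\pm1\}$ is the sign character $g\mapsto\det g$, equal to $(-1)$ raised to the number of $r\in R^{+}$ with $gr\in -R^{+}$. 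Finally, that $R^{+}$ is $G_{R}$-extremal on $\underline{R}$ is exactly the fact recalled in the paragraph preceding the theorem, coming from \cite{GSZ}.

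Granting these verifications, Theorem~\ref{L} applies with $L=W_{R}$, $\beta=(\cdot,\cdot)/\mathfrak{h}$, $N=|R^{+}|$, $G=G_{R}$. Using $\beta(r,\bz)=(r,\bz)/\mathfrak{h}$, $\beta(x)=(x,x)/(2\mathfrak{h})$, $\beta(gx,\bz)=(gx,\bz)/\mathfrak{h}$, $L_{ev}=W_{R,ev}$, and choosing the vector $w=\tfrac12\sum_{j}\varepsilon_{j}s_{j}$ from the note following Theorem~\ref{L} at the identity of $G_{R}$ (so all $\varepsilon_{j}=1$ and $w=\tfrac12\sum_{r\in R^{+}}r=\rho$), we obtain
\[
\eta(\tau)^{n-N}\prod_{r\in R^{+}}\theta\!\left(\frac{(r,\bz)}{\mathfrak{h}};\tau\right)=\gamma\sum_{x\in\rho+W_{R,ev}}q^{\frac{(x,x)}{2\mathfrak{h}}}\sum_{g\in G_{R}}\mathrm{sn}(g)\,e\!\left(\frac{(gx,\bz)}{\mathfrak{h}}\right)
\]
for a constant $\gamma$, and the left side lies in $J_{n/2,\underline{R}}(\epsilon^{n+2N})$, which is the Jacobi form assertion. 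To see $\gamma=1$ I would compare lowest-order terms in $q$. On the left, \eqref{eta} and \eqref{theta} give leading term $q^{(n+2N)/24}\prod_{r\in R^{+}}\bigl(e((r,\bz)/2\mathfrak{h})-e(-(r,\bz)/2\mathfrak{h})\bigr)$. On the right, $\rho$ is the regular, minimal-norm vector producing the first non-vanishing summand, whose contribution is $\gamma\,q^{(\rho,\rho)/(2\mathfrak{h})}\sum_{g\in G_{R}}\det(g)\,e((g\rho,\bz)/\mathfrak{h})$; by the Weyl denominator formula this equals $\gamma\,q^{(\rho,\rho)/(2\mathfrak{h})}\prod_{r\in R^{+}}\bigl(e((r,\bz)/2\mathfrak{h})-e(-(r,\bz)/2\mathfrak{h})\bigr)$. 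Comparing the two forces $\gamma=1$, and as a byproduct recovers the ``strange formula'' $(\rho,\rho)/(2\mathfrak{h})=(n+2N)/24$.

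The genuinely substantive input is the $G_{R}$-extremality of the star $R^{+}$, that is, that exactly one $G_{R}$-orbit in $W_{R}^{\bullet}/W_{R,ev}$ consists of vectors whose stabilizer lies in $\ker(\mathrm{sn})$ — equivalently, that for every other orbit the alternating inner sum $\sum_{g}\mathrm{sn}(g)\,e(\beta(gx,\bz))$ vanishes identically. Since this is quoted from \cite{GSZ}, the remaining work in the plan is essentially bookkeeping: translating Theorem~\ref{L} into the root-theoretic normalization, identifying the leading term with the Weyl denominator product, and confirming that the $q$-valuations match so that $\gamma=1$. The one place deserving care is the non-simply-laced case, where one must check that $\mathfrak{h}=\tfrac1n\sum_{r\in R^{+}}(r,r)$ is indeed the quantity for which the strange formula and the denominator-formula normalization are compatible.
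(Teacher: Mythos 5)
The paper contains no proof of Theorem~\ref{R2}: it is quoted from \cite{GSZ}, together with exactly the two inputs you rely on (Theorem~\ref{L} and the unproved assertion that $R^{+}$ is a $G_{R}$-extremal eutactic star on $\underline{R}$), so your route is the intended one rather than a different one. Your verification of the eutactic property via Schur's lemma and the trace computation $\lambda=2\mathfrak{h}$ is correct, as is the identification $\mathrm{sn}=\det$ on the Weyl group. A small quibble: integrality of $\underline{R}$ does not follow ``by reflexivity''; it is the statement $W_{R}\subseteq W_{R}^{\#}$, i.e.\ that $W_{R}$ lies inside the root lattice, which is a genuine (if easy) separate check.

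The real gap is the normalization step $\gamma=1$. Your comparison of lowest-order $q$-terms tacitly assumes that $\rho=w$ is the \emph{unique} minimal-norm element of the coset $w+W_{R,ev}$ with non-vanishing alternating sum, but the coset can meet the orbit $G_{R}\rho$ in several points whose contributions add. Already for $R=A_{2}$ with the paper's own data ($\mathfrak{h}=3$, $W_{A_{2}}=W_{A_{2},ev}=\{(x_{1},x_{2},-x_{1}-x_{2})\in\Z^{3}:x_{1}\equiv x_{2}\pmod{3}\}$, $w=\rho=(1,0,-1)$), the vectors $(-1,1,0)$ and $(0,-1,1)$ also lie in $\rho+W_{A_{2},ev}$, have the same norm as $\rho$, and are images of $\rho$ under $3$-cycles, so $\mathrm{sn}=+1$ and each contributes the same alternating sum; the coefficient of $q^{1/3}$ on the sum side is therefore three times the Weyl-denominator sum, while on the product side it is exactly the Weyl denominator. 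Your leading-term comparison then yields $3\gamma=1$ rather than $\gamma=1$ (and the ``strange formula'' byproduct is entangled in the same issue). So to complete the argument you must determine the multiplicity and signs of $G_{R}w\cap\bigl(w+W_{R,ev}\bigr)$ at minimal norm and reconcile the result with the precise normalization in \cite{GSZ}; this is exactly the bookkeeping your plan treats as automatic, and as written that step fails.
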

Note that for any $f \in F$ and $\bz \in \C \otimes W_{R}$ we can set $z_{f} := \frac{(f,\bz)}{\mathfrak{h}}$ and the application $\bz \mapsto \{z_{f} \}_{f \in F}$ defines an isomorphism of $\C$-vector spaces $\C \otimes W_{R} \to \C^{F}$ which maps $W_{R}$ onto $\Z^{F}$.  From this we see that $\frac{(r,\bz)}{\mathfrak{h}} = \sum_{f \in F} \gamma_{r,f} z_{f}$ and $\frac{(x,x)}{2\mathfrak{h}} = Q(\{x_{f}\})$ with $Q$ as in Theorem \ref{R1}.  This shows that Theorem \ref{R1} is a weaker form of this theorem.
Also note that the identities given in Theorem \ref{R2} are a repackaging of the Macdonald identities of \cite{MacDonald}, as noted in \cite{GSZ}.   The proof in \cite{GSZ} gives an alternative proof of the Macdonald identities. 

\section{Weight $1$ theta blocks}\label{wt1}
In this section we will explicitly compute some of the sum-to-product formulas given by Theorem \ref{R2} in order to illustrate that certain coefficients of the theta blocks we are studying vanish. In Section~\ref{MainThmSec}, this will allow us to prove that these coefficients are divisible by $\Phi_\ell(\zeta)$ by the following lemma.

\begin{Lemma}\label{Lem: equidistribution}
	Let $f(\zeta)$ be a Laurent polynomial and $\ell$ a prime. Define \\ 
	\noindent $F(r,\ell) = \sum_{j \equiv r \pmod{\ell}} [\zeta^j]f(\zeta)$. Then $\Phi_\ell(\zeta)\mid f(\zeta)$ in $\Q[\zeta^{-1}, \zeta]$ if and only if
	\begin{equation*}
		F(0,\ell) = F(1,\ell) = \ldots = F(\ell -1, \ell).
	\end{equation*} 
\end{Lemma}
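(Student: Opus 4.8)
The plan is to reduce the statement to a standard fact about group algebras / cyclotomic fields. Recall that $\Q[\zeta,\zeta^{-1}] = \Q[x,x^{-1}]$ (Laurent polynomial ring), and that $\Phi_\ell(x)$ is irreducible over $\Q$ with $\Q[x]/(\Phi_\ell(x)) \cong \Q(\omega_\ell)$ where $\omega_\ell$ is a primitive $\ell$-th root of unity. Since $\Phi_\ell$ has nonzero constant term, $\Phi_\ell(\zeta) \mid f(\zeta)$ in $\Q[\zeta,\zeta^{-1}]$ if and only if $\Phi_\ell(\zeta)\mid \zeta^M f(\zeta)$ in $\Q[\zeta]$ for the (unique) $M$ making $\zeta^M f(\zeta)$ a genuine polynomial with nonzero constant term; equivalently, if and only if $f(\omega_\ell) = 0$, since multiplication by $\omega_\ell^M$ is invertible in $\Q(\omega_\ell)$. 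So the whole statement is: $f(\omega_\ell)=0 \iff F(0,\ell)=\cdots=F(\ell-1,\ell)$.

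First I would write $f(\zeta) = \sum_j c_j \zeta^j$ with $c_j = [\zeta^j]f(\zeta)$, a finite sum. Grouping the exponents by residue class mod $\ell$ and using $\omega_\ell^\ell = 1$, we get
\[
f(\omega_\ell) = \sum_{r=0}^{\ell-1}\Bigl(\sum_{j \equiv r \,(\mathrm{mod}\,\ell)} c_j\Bigr)\omega_\ell^r = \sum_{r=0}^{\ell-1} F(r,\ell)\,\omega_\ell^r .
\]
Now the $F(r,\ell)$ are rational numbers, and $\{1,\omega_\ell,\dots,\omega_\ell^{\ell-2}\}$ is a $\Q$-basis of $\Q(\omega_\ell)$, with the single relation $1+\omega_\ell+\cdots+\omega_\ell^{\ell-1}=0$ coming from $\Phi_\ell$. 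Hence $\sum_{r=0}^{\ell-1} a_r \omega_\ell^r = 0$ with $a_r \in \Q$ holds if and only if $a_0 = a_1 = \cdots = a_{\ell-1}$ (the vanishing combinations are exactly the scalar multiples of $(1,1,\dots,1)$). Applying this with $a_r = F(r,\ell)$ gives $f(\omega_\ell)=0 \iff F(0,\ell)=\cdots=F(\ell-1,\ell)$, which is what we wanted.

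The only step needing a word of care is the linear-algebra fact that the $\Q$-linear relations among $1,\omega_\ell,\dots,\omega_\ell^{\ell-1}$ are spanned by $(1,1,\dots,1)$; this is immediate since the kernel of the surjection $\Q^\ell \twoheadrightarrow \Q(\omega_\ell)$, $(a_r)\mapsto \sum a_r\omega_\ell^r$, is one-dimensional (by comparing dimensions $\ell$ and $\ell-1$) and contains $(1,\dots,1)$. Equivalently, $\sum a_r \omega_\ell^r=0$ says the polynomial $\sum a_r x^r \in \Q[x]$ is divisible by $\Phi_\ell(x)$, and since it has degree $\le \ell-1 = \deg\Phi_\ell$ this forces $\sum a_r x^r = c\,\Phi_\ell(x)$ for some $c\in\Q$, i.e.\ all $a_r$ equal. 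I do not anticipate any real obstacle here; the content is entirely the observation that $f(\omega_\ell)=0$ is the right reformulation of $\Phi_\ell(\zeta)\mid f(\zeta)$ and that equidistribution of the residue-class coefficient sums is precisely the vanishing of that evaluation.
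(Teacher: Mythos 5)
Your proposal is correct and follows essentially the same route as the paper: reduce the Laurent divisibility to divisibility in $\Q[\zeta]$ (since $\zeta$ is a unit), use irreducibility of $\Phi_\ell$ to translate this into $f(\omega_\ell)=0$, group coefficients into the residue-class sums $F(r,\ell)$, and invoke the fact that the only $\Q$-linear relation among $1,\omega_\ell,\dots,\omega_\ell^{\ell-1}$ is the all-ones relation. The paper phrases the last step by subtracting $F(\ell-1,\ell)$ and using that $1,\zeta_\ell,\dots,\zeta_\ell^{\ell-2}$ is a $\Q$-basis, which is the same linear-algebra content as your kernel/degree argument.
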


\begin{proof}
	Multiplying $f(\zeta)$ by a sufficiently large power of $\zeta$ divisible by $\ell$ and using the fact that $\gcd(\zeta, \Phi_\ell(\zeta)) = 1$, we may reduce to the case that $f(\zeta) \in \Q[\zeta]$. Since $\Phi_\ell(\zeta)$ is irreducible over $\Q[\zeta]$, it is a standard fact from algebra that $\Phi_\ell(\zeta)\mid f(\zeta)$ is equivalent to $f(\zeta_\ell) = 0$. Writing $f(\zeta) = \sum a_j \zeta^j$, we see that
	\begin{align}\label{Eqn: root of unity evaluation}
		f(\zeta_\ell) = \sum a_j \zeta_\ell^j &= \sum_{r=0}^{\ell -1} \left( \sum_{j \equiv r\pmod{\ell}} a_j \right) \zeta_\ell^r = \sum_{r = 0}^{\ell - 1} F(r,\ell) \zeta_\ell^r \nonumber\\
		&= \sum_{r=0}^{\ell - 2} (F(r,\ell) - F(\ell -1, \ell))\zeta_\ell^r,
	\end{align}
	where in the last equality we have used the fact that $1 + \zeta_\ell + \dots + \zeta_{\ell}^{\ell - 1} = 0$. Recall by the primitive element theorem that $1, \zeta, \dots, \zeta^{\ell -2}$ is a basis for $\Q[\zeta]$ over $\Q$, so we see that \eqref{Eqn: root of unity evaluation} is $0$ if and only if $F(r,\ell) = F(\ell - 1, \ell)$ for $r = 0, \dots, \ell - 2$. 
\end{proof}

\noindent By taking $f(\zeta) = \sum_m M(m,n)\zeta^m$, Lemma~\ref{Lem: equidistribution} proves that Theorem~\ref{Thm: crank equidistribution} is simply restating the well-known result on equidistribution of the crank.

Now, define
\begin{align*}
\theta^{*}(z; \tau) &= \eta(\tau) \frac{\theta(2z; \tau)}{\theta(z; \tau)} = q^{1/24} (\zeta^{1/2} + \zeta^{-1/2}) \prod_{n=1}^{\infty} (1-q^n)(1+ \zeta^{\pm 1} q^n)(1 - \zeta^{\pm 2} q^{2n-1}) \\
&= \sum_{n=-\infty}^{\infty} \left(\frac{12}{n} \right) \zeta^{n/2} q^{n^2 /24} = \sum_{n = -\infty}^{\infty} (-1)^n q^{\frac{(6n+1)^2}{24}} \zeta^{3n+\frac{1}{2}} \left[ 1 + \zeta^{-6n-1} \right],
\end{align*}
which is a Jacobi form of weight $1/2$, index $3/2$ and multiplier system $\epsilon$.  The theta blocks we are interested in were singled out in Theorem 12.2 of \cite{GSZ} which states that $J_{1,m}(\epsilon^h)$ for $h=4,6,8,10$, and $14$ is spanned by $\phi_{R}(\ell z ; \tau)$ where $\ell$ runs through all elements of $\underline{R}$ with square length $2m$ and $R$ is given in Table~\ref{TableThetaBlocks}.  For $h=2$ the space $J_{1,m}(\epsilon^2)$ contains the theta blocks, but is not necessarily spanned by them.  Further, for all other values of $h \pmod{24}$ one has $J_{1,m}(\epsilon^h) =0$ (recall that $\epsilon$ has order $24$). We are now able to state the main result of Section~\ref{wt1}.

\begin{center}
\begin{table}[h]
\begin{tabular}{ c c c }
\hline \\
 $h$ & $R$ & $\phi_{R}(\bz; \tau)$ \\ 
 \hline \\
 $2$ & $A_{1} \oplus A_{1}$ & $\theta^{*}(z_{1}; \tau) \theta^{*}(z_{2}; \tau)$ \\  
 \hline \\
 $4$ & $A_{1} \oplus A_{1}$ & $\theta(z_{1}; \tau) \theta^{*}(z_{2}; \tau)$ \\ 
 \hline \\
 $6$ & $A_{1} \oplus A_{1}$ & $\theta(z_{1}; \tau) \theta(z_{2}; \tau)$ \\ 
 \hline \\
 $8$ & $A_{2}$ & $ \eta(\tau)^{-1} \theta(z_{1}; \tau) \theta(z_{2}; \tau) \theta(z_1 + z_2; \tau)$ \\ 
 \hline \\
 $10$ & $B_{2}$ & $\eta(\tau)^{-2}\theta(z_{1}; \tau) \theta(z_{2}; \tau) \theta(z_{1} + z_{2}; \tau) \theta(z_{1} + 2z_{2}; \tau)$ \\ 
 \hline \\
 $14$ & $G_{2}$ & $\eta(\tau)^{-4} \theta(z_{1}; \tau) \theta(z_{2}; \tau) \theta(z_{1} + z_{2}; \tau) \theta(2z_{1} + z_{2}; \tau) \theta(3z_{1} + z_{2}; \tau) \theta(3z_{1} + 2z_{2}; \tau)$ \\ 
 \hline
\end{tabular}
\hspace{.5cm}
\caption{Theta blocks $\phi_R$ of character $\epsilon^h$ determined by root systems $R$}
\label{TableThetaBlocks}
\end{table}
\end{center}

\begin{Lemma}\label{vanishingcoeffs}
	Suppose $\{\ell n+\delta_{k,\ell}\}_{n\in\Z_{\geq0}}$ is a generic congruence progression for $P_k(\tau)$ with $k+h = \ell t$ and $h \in \{4,6,8,10,14\}$. Then if $\phi_R(\bz;\tau)$ is the theta block associated to $h$ in Table~\ref{TableThetaBlocks}, then for $a, b \in \Z$ we have
	\begin{equation*}
		\Phi_{\ell}(\zeta) \bigg | \left[q^{\ell n + \delta_{k, \ell} + \frac{h}{24}} \right] \phi_R(az, bz;\tau),
	\end{equation*}
	
\end{Lemma}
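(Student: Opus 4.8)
The plan is to pass from the product to the sum side of the theta block and then run a classical anisotropy argument. For $h\in\{8,10,14\}$ I would apply Theorem~\ref{R2} to the irreducible root system $R$ ($=A_2,B_2,G_2$ for $h=8,10,14$ respectively) of Table~\ref{TableThetaBlocks}, writing
\[
\phi_R(\bz;\tau)=\sum_{x\in w+W_{R,ev}}q^{(x,x)/(2\mathfrak{h})}\sum_{g\in G_R}\mathrm{sn}(g)\,e\!\left(\frac{(gx,\bz)}{\mathfrak{h}}\right),\qquad w=\tfrac12\sum_{r\in R^+}r.
\]
For $h\in\{4,6\}$ the theta block carries no negative power of $\eta$, so instead I would just multiply out the explicit expansions $\theta(z;\tau)=\sum_n\left(\frac{-4}{n}\right)\zeta^{n/2}q^{n^2/8}$ and $\theta^{*}(z;\tau)=\sum_n\left(\frac{12}{n}\right)\zeta^{n/2}q^{n^2/24}$ recorded above. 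In each case, setting $\bz=(az,bz)$ and extracting the coefficient of $q^{m}$ with $m:=\ell n+\delta_{k,\ell}+\tfrac{h}{24}$ produces a \emph{finite} Laurent polynomial $f(\zeta):=[q^{m}]\,\phi_R(az,bz;\tau)=\sum_x\epsilon_x\,\zeta^{\lambda_{a,b}(x)}$, summed over the finitely many lattice points $x$ whose $q$-exponent equals $m$, where $\epsilon_x\in\Q$ and $x\mapsto\lambda_{a,b}(x)$ is the exponent map induced by the specialization. By the argument in the proof of Lemma~\ref{Lem: equidistribution}, it then suffices to prove $f(\zeta_\ell)=0$.

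The key point is that every contributing $x$ is divisible by $\ell$. Since $k+h=\ell t$, we have $24\delta_{k,\ell}\equiv k\equiv -h\pmod{\ell}$, whence $24m=24\ell n+24\delta_{k,\ell}+h\equiv k+h=\ell t\equiv 0\pmod{\ell}$; as $\gcd(24,\ell)=1$ this forces the binary quadratic form controlling the $q$-exponent to take a value divisible by $\ell$. Unwinding the normalizations $\mathfrak{h},W_R,W_{R,ev}$ of Theorem~\ref{R2} (and, for $h=4$, the $\theta^{*}$ twist), this form is — up to a bounded scaling — a binary form of discriminant $-3$ for $h\in\{8,14\}$ (the hexagonal/Eisenstein lattice $A_2$, which also sits inside $G_2$), of discriminant $-12$ for $h=4$ (the form $3n_1^2+n_2^2$), and of discriminant $-4$ for $h\in\{6,10\}$ (the Gaussian lattice $\Z^2$ and the $B_2$ lattice). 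By the classical criterion, in all cases the form is \emph{anisotropic} modulo $\ell$ exactly when $-3$ (for $h\in\{4,8,14\}$), respectively $-4$ (for $h\in\{6,10\}$), is a quadratic non-residue — i.e.\ when $\ell\equiv 2\pmod 3$, respectively $\ell\equiv 3\pmod 4$. But these are exactly the conditions on $\ell$ that Theorem~\ref{c} imposes for a congruence with this value of $h$ to occur, and by hypothesis $\{\ell n+\delta_{k,\ell}\}$ is such a congruence progression. Hence the form is anisotropic mod $\ell$, so the contributing $x$ all lie in $\ell$ times a fixed lattice $L$. In the root-system cases $G_R$ preserves $L$ (acting through signed permutations), so $gx\in\ell L$ as well; and since the map sending a lattice point to the $\zeta^{1/2}$-exponent of the corresponding term of $f$ is $\Z$-linear on $L$, every exponent of $\zeta^{1/2}$ occurring in $f$ is divisible by $\ell$.

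Write $u:=\zeta^{1/2}$ (a Laurent polynomial in $\zeta$ being also one in $u$). The previous paragraph gives $f=F(u^{\ell})$ for a Laurent polynomial $F$. On the other hand, $\phi_R(az,bz;\tau)$ vanishes identically when $z\in\Z$: each $\phi_R$ in Table~\ref{TableThetaBlocks} contains a genuine theta factor $\theta\!\left((c_1a+c_2b)z;\tau\right)$ with $c_1,c_2\in\Z$, and $\theta(N;\tau)=(-1)^N\theta(0;\tau)=0$ for $N\in\Z$. Specializing to $z=0$ (so $u=1$) and $z=1$ (so $u=-1$) therefore yields $F(1)=F(-1)=[q^m]\,0=0$. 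Finally, since $\ell$ is odd, any $\xi$ with $\xi^{2}=\zeta_\ell$ satisfies $\xi^{\ell}\in\{1,-1\}$, so $f(\zeta_\ell)=F(\xi^{\ell})=0$. By (the proof of) Lemma~\ref{Lem: equidistribution}, $f(\zeta_\ell)=0$ is equivalent to $\Phi_\ell(\zeta)\mid f(\zeta)$, which is the assertion.

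The step I expect to be the main obstacle is the middle paragraph: tracking the normalizations $\mathfrak{h}$, $W_R$, $W_{R,ev}$ and the Weyl-vector shift $w=\tfrac12\sum_{r\in R^+}r$ through Theorem~\ref{R2} carefully enough to pin down each of the five binary quadratic forms together with its discriminant — and, for $h=4$, to correctly handle the $\theta^{*}$ twist — and then checking that the mere existence of a Theorem~\ref{c} congruence with a given $h$ supplies precisely the anisotropy condition needed ($\ell\equiv 2\pmod 3$ for $h\in\{4,8,14\}$, $\ell\equiv 3\pmod 4$ for $h\in\{6,10\}$). The remaining ingredients — extracting the Fourier coefficient, the congruence $24\delta_{k,\ell}\equiv -h\pmod\ell$, the vanishing of $\phi_R$ on the integral locus, and the passage $f=F(u^\ell)$ — are routine.
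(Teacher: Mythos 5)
Your proposal is correct, and its engine is the same as the paper's: pass to the sum side of the theta block (Theorem~\ref{R2} for $A_2$, $B_2$, $G_2$; the explicit expansions of $\theta$ and $\theta^{*}$ for $h=4,6$), observe via $24\delta_{k,\ell}\equiv k\equiv -h\pmod{\ell}$ that the contributing lattice points satisfy a binary quadratic congruence, and use anisotropy of the hexagonal (resp.\ Gaussian) form modulo $\ell$ under the hypotheses $\ell\equiv 2\pmod 3$ (resp.\ $\ell\equiv 3\pmod 4$) supplied by Theorem~\ref{c} to force all coordinates to be divisible by $\ell$. This is exactly the paper's argument, carried out explicitly there for $h=4,8,14$ (with $h=6,10$ left as analogous). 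Where you genuinely diverge is the endgame. The paper finishes case by case: for $h=8,14$ it notes that the Weyl-group alternating bracket collapses (equal numbers of $+$ and $-$ terms, all reducing to the same root of unity when $x_1\equiv x_2\equiv 0\pmod\ell$), and for $h=4$ it exploits the explicit vanishing factor $1-\zeta^{-a(2m+1)}$. You instead note that all $u$-exponents ($u=\zeta^{1/2}$) of the surviving terms are multiples of $\ell$, so the coefficient is $F(u^{\ell})$, and then kill $F(\pm 1)$ by the identical vanishing of the theta block at integral $z$ (each block contains a genuine $\theta$-factor, and $\theta(N;\tau)\equiv 0$ for $N\in\Z$). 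This buys uniformity: one argument covers the $\theta^{*}$ twist at $h=4$ and the omitted $h=6,10$ cases, and it also treats the half-integral $\zeta$-powers cleanly (evaluating at all $\xi$ with $\xi^{2}=\zeta_\ell$), which the paper glosses over. The cost is that the step you flag as the obstacle -- verifying that the specialized $\zeta$-exponents are integral linear forms in $(x_1,x_2)$ and pinning down the forms and normalizations $\mathfrak{h}$, $W_R$, $w$ -- is precisely the explicit bookkeeping the paper performs in Section~\ref{wt1}; once that is written out (as the paper does for $A_2$ and $G_2$), your finish goes through.
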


\noindent The remainder of the section is devoted to the proof of Lemma~\ref{vanishingcoeffs}. The proof is broken up into cases, and each subsection is another case based on the value of $h$. Only the cases $h=4, 8, 14$ are presented as the other cases can be worked out analogously. Throughout the section, $\bz = (z_1, z_2)$.If a Macdonald-type identity is found for $h=26$ the authors believe that similar techniques will be able to be used to define crank functions explaining the corresponding congruences.

\subsection{$h=4$}
When $k \equiv - 4 \pmod{\ell}$ and $\ell \equiv 2 \pmod{3}$ is prime, then we have $p_{k} \left( \ell n + \frac{\ell^2 -1}{6} \right) \equiv 0 \pmod{\ell}$.  The $h=4$ case is slightly different from the others because there is a $\theta^{*}$ which causes there to be a product in the denominator.  When we multiply by $\phi_{R}/\phi_{R}$ in this case we will actually want to cancel terms, so we end up with
\begin{equation*}
\widetilde{\theta}_{A_{1} \oplus A_{1}}(az; \tau) =\prod_{n \geq 1} (1-q^n)^2 (1-\zeta^{\pm a} q^n)
\end{equation*}
in the numerator (this is the Jacobi form $\eta(\tau) \theta(az; \tau)$ with the prefactors removed).  We have
\begin{align*}
\widetilde{\theta}_{A_{1} \oplus A_{1}}(az; \tau) &=\sum_{n \in \Z} (-1)^n q^{\frac{3 \left( n - \frac{1}{6} \right)^2}{2} - \frac{1}{24}} \cdot \sum_{m \in \Z} (-1)^m q^{\frac{\left(m + \frac{1}{2} \right)^2}{2} -\frac{1}{8}} \zeta^{am} \left[ \frac{1 - \zeta^{-a(2m+1)}}{1 - \zeta^{-a}} \right].
\end{align*}
We want to see when the power of $q$ here is $\frac{\ell^2 -1}{6} \pmod{\ell}$.  Note that throughout we will write $\frac{1}{t} \pmod{\ell}$ as the multiplicative inverse of $t$ modulo $\ell$.  The power of $q$ is $\frac{\ell^2 -1}{6} \pmod{\ell}$ when $3 \left(n - \frac{1}{6} \right)^2 + \left(m +\frac{1}{2} \right)^2 \equiv 0 \pmod{\ell}$.  A standard quadratic reciprocity argument shows that $-3$ is not a square modulo $\ell$ for a prime $\ell \equiv 2 \pmod{3}$ and so there are no non-zero solutions to $3x^2 + y^2 \equiv 0 \pmod{\ell}$. 
Therefore, the only way we can get the correct residue class is if both of these terms are $0 \pmod{\ell}$ or equivalently $n \equiv \frac{\ell +1}{6} \pmod{\ell}$ and $m \equiv \frac{\ell -1}{2} \pmod{\ell}$.  When $m \equiv \frac{\ell -1}{2} \pmod{\ell}$ and we specialize $\zeta= \zeta_{\ell}$, the term $1-\zeta^{-a(2m+1)}$ is zero and so the coefficients $[q^{\ell n + \frac{\ell^2 -1}{6}}]\widetilde{\theta}_{A_{1} \oplus A_{1}}$ vanish.

\subsection{$h=8$}
When $k\equiv -8 \pmod{\ell}$ and $\ell \equiv 2 \pmod{3}$ we have $p_{k} \left( \ell n + \frac{\ell^2 -1}{3} \right) \equiv 0 \pmod{\ell}$.  

In this case the theta block comes from the root system
\begin{equation*}
A_{2} = \{ \pm(1,-1,0), \pm (0,1,-1), \pm(1,0,-1) \},
\end{equation*}
which has associated Euclidean space
\begin{equation*}
E_{A_{2}} = \{x=(x_{1}, x_{2}, x_{3}) \in \R^3 : x_{1} + x_{2} + x_{3} =0\}.
\end{equation*}
We can choose
\begin{equation*}
A_{2}^{+} = \{ (1,-1,0), (0,1,-1), (1,0,-1) \} = \{r_{1}, r_2, r_3 = r_{1} +r_{2} \},\quad F_{A_{2}} = \{r_{1}, r_{2} \}.
\end{equation*}
We have that
\begin{equation*}
G_{A_{2}} = S_{3} = \{Id, (12), (13), (23), (123), (132)\}
\end{equation*}
where elements act by permuting the components of $x \in E_{A_{2}}$ as usual.  Therefore we have
\begin{align*}
{\rm{sn}}(Id) &= {\rm{sn}}((123)) = {\rm{sn}}((132)) =1 \\
{\rm{sn}}((12)) &= {\rm{sn}}((13)) = {\rm{sn}}((23)) =-1.
\end{align*}  
A short computation tells us that $\mathfrak{h}=3$ and $w =(1,0,-1)$.  We also find that
\begin{equation*}
W_{A_{2}} = \left\{ x \in E_{A_{2}} : \frac{(x,r)}{3} \in \Z \ \forall r \in A_{2} \right\} = \{(x_{1}, x_{2}, -x_{1}-x_{2}) \in \Z^3 : x_{1} \equiv x_{2} \pmod{3} \}
\end{equation*}
and $W_{A_{2}, ev} = W_{A_{2}}$.  In general, $x=(x_{1}, x_{2}, -x_{1} - x_{2}) = x_{1}r_{1} + (x_{1}+x_{2})r_{2}$ so we have
\begin{align*}
Id(x) &= x_{1}r_{1} +(x_{1}+x_{2})r_{2} \\
(12)(x) &= x_{2}r_{1} + (x_{1}+x_{2})r_{2} \\
(13)(x) &= -(x_{1}+x_{2})r_{1} - x_{1} r_{2} \\
(23)(x) &= x_{1}r_{1} -x_{2}r_{2} \\
(123)(x) &= -(x_{1}+x_{2})r_{1} -x_{2}r_{2} \\
(132)(x) &= x_{2}r_{1} -x_{1}r_{2}.
\end{align*}
As before, we will make the change of variables $z_{1} = \frac{(r_{1},\bz)}{3}$ and $z_{2}=\frac{(r_{2},\bz)}{3}$ to find
\begin{align*}
\theta_{A_{2}}(\bz; \tau) &= \eta(\tau)^{-1} \theta(z_{1}; \tau) \theta(z_{2}; \tau) \theta(z_{1} + z_{2}; \tau) \\
&= q^{\frac{1}{3}} \zeta_{1}^{-1} \zeta_{2}^{-1}(1-\zeta_{1})(1-\zeta_{2})(1-\zeta_{1}\zeta_{2})
\\
&\times \prod_{n \geq 1} (1-q^n)^{2} (1-\zeta_{1}^{\pm 1} q^n)(1-\zeta_{2}^{\pm 1}q^n)(1-(\zeta_{1}\zeta_{2})^{\pm 1} q^n) \\
&=\sum_{\substack{x_{1} ,x_{2} \in  \Z \\ x_{1} +1 \equiv x_{2} \pmod{3}}} q^{\frac{x_{1}^2 + x_{1}x_{2} + x_{2}^2}{3}} \\
&\times \left[ \zeta_{1}^{x_{1}} \zeta_{2}^{x_{1}+x_{2}} + \zeta_{1}^{-x_{1}-x_{2}} \zeta_{2}^{-x_{2}} + \zeta_{1}^{x_{2}} \zeta_{2}^{-x_{1}} - \zeta_{1}^{x_{2}} \zeta_{2}^{x_{1}+x_{2}} - \zeta_{1}^{-x_{1}-x_{2}} \zeta_{2}^{-x_{1}} -\zeta_{1}^{x_{1}} \zeta_{2}^{-x_{2}} \right]
\end{align*}
We therefore find
\begin{align*}
&\prod_{n \geq 1} (1-q^n)^{2} (1-\zeta_{1}^{\pm 1} q^n)(1-\zeta_{2}^{\pm 1}q^n)(1-(\zeta_{1}\zeta_{2})^{\pm 1} q^n) \\
&=\frac{\zeta_{1} \zeta_{2}}{(1-\zeta_{1})(1-\zeta_{2})(1-\zeta_{1} \zeta_{2})} \sum_{\substack{x_{1} ,x_{2} \in  \Z \\ x_{1} +1 \equiv x_{2} \pmod{3}}} q^{\frac{x_{1}^2 + x_{1}x_{2} + x_{2}^2}{3} -\frac{1}{3}} \\
&\times \left[ \zeta_{1}^{x_{1}} \zeta_{2}^{x_{1}+x_{2}} + \zeta_{1}^{-x_{1}-x_{2}} \zeta_{2}^{-x_{2}} + \zeta_{1}^{x_{2}} \zeta_{2}^{-x_{1}} - \zeta_{1}^{x_{2}} \zeta_{2}^{x_{1}+x_{2}} - \zeta_{1}^{-x_{1}-x_{2}} \zeta_{2}^{-x_{1}} -\zeta_{1}^{x_{1}} \zeta_{2}^{-x_{2}} \right]
\end{align*}
The power of $q$ is $\frac{\ell^2 -1}{3} \pmod{\ell}$ when $x_{1}^2 + x_{1} x_{2}+ x_{2}^2 \equiv 0 \pmod{\ell}$.  It is well known that when $\ell \equiv 2 \pmod{3}$ we can only have $x^2 + xy + y^2 = (x-y)^2 + 3xy \equiv 0 \pmod{\ell}$ when $x \equiv y \equiv 0 \pmod{\ell}$.  This means we must have $x_{1} \equiv x_{2} \equiv 0 \pmod{\ell}$.  When $\zeta_1$ and $\zeta_2$ are specialized to $\ell$-th roots of unity the term inside the brackets vanishes and so the coefficients $[q^{\ell n + \delta_{k,\ell} + \frac{1}{3}} ]\theta_{A_{2}}$ will vanish as well.

\subsection{$h=14$}
When $k \equiv -14 \pmod{\ell}$ and $\ell \equiv 2 \pmod{3}$ then we have $p_{k} \left( \ell n +7 \cdot \frac{\ell^2 -1}{12} \right) \equiv 0 \pmod{\ell}$.

In this case the root system is 
\begin{equation*}
G_{2} = \{ \pm(1,-1,0), \pm(-1,2,-1), \pm(0,1,-1), \pm(1,0,-1), \pm(2,-1,-1), \pm(1,1,-2) \}
\end{equation*}
which has Euclidean space
\begin{equation*}
E_{G_{2}} = \{x=(x_{1}, x_{2}, x_{3}) \in \R^3: x_{1} + x_{2} + x_{3} =0 \}.
\end{equation*}
We can choose
\begin{align*}
G_{2}^{+} &= \{(1,-1,0), (-1,2,-1), (0,1,-1), (1,0,-1), (2, -1, -1), (1,1,-2) \} \\
&= \{r_{1}, r_{2}, r_{3}=r_{1}+r_{2}, r_{4}=2r_{1} + r_{2}, r_{5}=3r_{1} + r_{2}, r_{6} = 3r_{1} + 2r_{2} \}
\end{align*}
and
\begin{equation*}
F_{G_{2}} = \{r_{1}, r_{2} \}.
\end{equation*}
A calculation shows 
\begin{equation*}
G_{G_{2}} = \{ \pm Id, \pm s_{r_{1}}s_{r_{2}}, \pm s_{r_{1}}s_{r_{3}}, \pm s_{r_{1}}, \pm s_{r_{2}}, \pm s_{r_{3}} \} \cong D_{6}
\end{equation*}
with
\begin{align*}
{\rm{sn}}(\pm Id) &= {\rm{sn}}(\pm s_{r_{1}}s_{r_{2}}) = {\rm{sn}}(\pm s_{r_{1}} s_{r_{3}}) =1 \\
{\rm{sn}}(\pm s_{r_{1}}) &= {\rm{sn}}(\pm s_{r_{2}}) = {\rm{sn}}(\pm s_{r_{3}}) =-1.
\end{align*}
We have $\mathfrak{h}=12$ and $w=(2,1,-3)$ and find
\begin{align*}
W_{G_{2}} &= \left\{ x \in E_{G_{2}} : \frac{(x,r)}{12} \in \Z \ \forall r \in G_{2} \right\} \\
&= \{ (x_{1}, x_{2}, -x_{1}-x_{2}) \in \Z^3 : x_{1} \equiv x_{2} \equiv 0 \pmod{4} \ \text{and} \ x_{1} \equiv x_{2} \pmod{3} \} \\
&= W_{G_{2}, ev}.
\end{align*}
We have $x=(x_{1}, x_{2}, -x_{1}-x_2) = (2x_{1} + x_{2})r_{1} + (x_{1}+x_{2})r_{2}$ so by computing the action of $G_{G_{2}}$ on the simple roots we find
\begin{align*}
\pm Id(x) &= \pm \left[(2x_{1} + x_{2})r_{1} + (x_{1} + x_{2})r_{2} \right] \\
\pm s_{r_{1}} s_{r_{2}}(x) &= \pm \left[(x_{1} - x_{2})r_{1} + x_{1} r_{2} \right] \\
\pm s_{r_{1}} s_{r_{3}}(x) &= \pm \left[ (-x_{1}-2x_{2})r_{1} - x_{2}r_{2} \right] \\
\pm s_{r_{1}}(x) &= \pm \left[ (x_{1} + 2x_{2})r_{1} + (x_{1}+x_{2})r_{2} \right] \\
\pm s_{r_{2}}(x) &= \pm \left[(2x_{1} + x_{2})r_{1} + x_{1}r_{2} \right] \\
\pm s_{r_{3}}(x) &= \pm \left[ (x_{1} - x_{2})r_{1} -x_{2}r_{2} \right].
\end{align*}
We make the changes of variable $z_{1} = \frac{(r_{1},\bz)}{12}$ and $z_{2} = \frac{(r_{2},\bz)}{12}$ to obtain
\begin{align*}
&\theta_{G_{2}}(\bz; \tau) = \eta (\tau)^{-4} \theta(z_{1}; \tau) \theta(z_{2}; \tau) \theta(z_{1} + z_{2}; \tau) \theta(2z_{1} + z_{2}; \tau) \theta(3z_{1}+z_{2}; \tau) \theta(3z_{1} + 2z_{2}; \tau) \\
&=q^{\frac{7}{12}} \frac{(1-\zeta_{1})(1-\zeta_{2})(1-\zeta_{1} \zeta_{2})(1-\zeta_{1}^2 \zeta_{2})(1-\zeta_{1}^3 \zeta_{2})(1-\zeta_{1}^3 \zeta_{2}^2)}{\zeta_{1}^{5} \zeta_{2}^3} \\
&\times \prod_{n \geq 1} (1-q^n)^2 (1-\zeta_{1}^{\pm 1} q^n)(1- \zeta_{2}^{\pm 1} q^n)
\\
&\times(1- (\zeta_{1} \zeta_{2})^{\pm 1} q^n)(1-(\zeta_{1}^2 \zeta_{2})^{\pm 1} q^n)(1-(\zeta_{1}^3 \zeta_{2})^{\pm 1} q^n)(1-(\zeta_{1}^3 \zeta_{2}^2)^{\pm 1} q^n) \\
&= \sum_{\substack{x_{1}, x_{2} \in \Z \\ x_{1}  \equiv x_{2} +1 \equiv 2 \pmod{4} \\ x_{1} \equiv x_{2}+1 \pmod{3}}} q^{\frac{x_{1}^2 + x_{1} x_{2} + x_{2}^2}{12}} \\ 
&\times \left[\zeta_{1}^{2x_{1} + x_{2}} \zeta_{2}^{x_{1} + x_{2}} + \zeta_{1}^{-2x_{1} - x_{2}} \zeta_{2}^{-x_{1} - x_{2}} + \zeta_{1}^{x_{1} -x_{2}} \zeta_{2}^{x_{1}} + \zeta_{1}^{-x_{1} + x_{2}} \zeta_{2}^{-x_{1}} + \zeta_{1}^{-x_{1} - 2x_{2}} \zeta_{2}^{-x_{2}} + \zeta_{1}^{x_{1}+2x_{2}} \zeta_{2}^{x_{2}} \right. \\
&\left. - \zeta_{1}^{x_{1} +2x_{2}} \zeta_{2}^{x_{1} + x_{2}} - \zeta_{1}^{-x_{1} -2x_{2}} \zeta_{2}^{-x_{1}-x_{2}} - \zeta_{1}^{2x_{1} + x_{2}} \zeta_{2}^{x_{1}} - \zeta_{1}^{-2x_{1} -x_{2}} \zeta_{2}^{-x_{1}} -\zeta_{1}^{x_{1} - x_{2}} \zeta_{2}^{-x_{2}} - \zeta_{1}^{-x_{1}+x_{2}} \zeta_{2}^{x_{2}} \right].
\end{align*}
Thus we have
\begin{align*}
&\prod_{n \geq 1} (1-q^n)^2 (1-\zeta_{1}^{\pm 1} q^n)(1- \zeta_{2}^{\pm 1} q^n)
\\
&\times(1- (\zeta_{1} \zeta_{2})^{\pm 1} q^n)(1-(\zeta_{1}^2 \zeta_{2})^{\pm 1} q^n)(1-(\zeta_{1}^3 \zeta_{2})^{\pm 1} q^n)(1-(\zeta_{1}^3 \zeta_{2}^2)^{\pm 1} q^n) \\
&=\frac{\zeta_{1}^{5} \zeta_{2}^3}{(1-\zeta_{1})(1-\zeta_{2})(1-\zeta_{1} \zeta_{2})(1-\zeta_{1}^2 \zeta_{2})(1-\zeta_{1}^3 \zeta_{2})(1-\zeta_{1}^3 \zeta_{2}^2)} \sum_{\substack{x_{1}, x_{2} \in \Z \\ x_{1}  \equiv x_{2} +1 \equiv 2 \pmod{4} \\ x_{1} \equiv x_{2}+1 \pmod{3}}} q^{\frac{x_{1}^2 + x_{1} x_{2} + x_{2}^2}{12} - \frac{7}{12}} \\ 
&\times \left[\zeta_{1}^{2x_{1} + x_{2}} \zeta_{2}^{x_{1} + x_{2}} + \zeta_{1}^{-2x_{1} - x_{2}} \zeta_{2}^{-x_{1} - x_{2}} + \zeta_{1}^{x_{1} -x_{2}} \zeta_{2}^{x_{1}} + \zeta_{1}^{-x_{1} + x_{2}} \zeta_{2}^{-x_{1}} + \zeta_{1}^{-x_{1} - 2x_{2}} \zeta_{2}^{-x_{2}} + \zeta_{1}^{x_{1}+2x_{2}} \zeta_{2}^{x_{2}} \right. \\
&\left. - \zeta_{1}^{x_{1} +2x_{2}} \zeta_{2}^{x_{1} + x_{2}} - \zeta_{1}^{-x_{1} -2x_{2}} \zeta_{2}^{-x_{1}-x_{2}} - \zeta_{1}^{2x_{1} + x_{2}} \zeta_{2}^{x_{1}} - \zeta_{1}^{-2x_{1} -x_{2}} \zeta_{2}^{-x_{1}} -\zeta_{1}^{x_{1} - x_{2}} \zeta_{2}^{-x_{2}} - \zeta_{1}^{-x_{1}+x_{2}} \zeta_{2}^{x_{2}} \right].
\end{align*}
As before, the only way to get the power of $q$ to be $7 \frac{\ell^2 -1}{12} \pmod{\ell}$ with $\ell \equiv 2 \pmod{3}$ is when $x_{1} \equiv x_{2} \equiv 0 \pmod{\ell}$.  When this is the case the term inside the bracket vanishes when $\zeta_{1}$ and $\zeta_{2}$ are set to $\ell$-th roots of unity and so the coefficients $[q^{\ell n +\delta_{k, \ell} + \frac{7}{12} }]\theta_{G_{2}}$ also vanish.

\section{Proof of Theorem \ref{MainThm}}\label{MainThmSec}
The main idea for the proof of Theorem \ref{MainThm} is to multiply the numerator and denominator by a theta block.  If there is a congruence coming from $k + h = \ell t$ for a prime $\ell$, then we will multiply our crank function by $\phi_R/\phi_R$ for the corresponding theta block $\phi_R$ determined by the value $h$ in Table~\ref{TableThetaBlocks}.  When this happens, the denominator will be of the form $1-q^{\ell n} + \Phi_{\ell}(\zeta)f(z;\tau)$ for some function $f$.  Therefore, when we set $\zeta$ equal to an $\ell$-th root of unity, the denominator will only be supported on powers of $q^{\ell}$.  The only terms in the numerator will be those given by the theta block, so the congruences are therefore explained due to Lemma~\ref{vanishingcoeffs}.

We begin by showing that the denominator of our crank generating function multiplied by the appropriate theta block is only supported on powers of $q^{\ell}$ modulo $\Phi_\ell(\zeta)$. Our first lemma is for the crank generating function given by \eqref{first def}.

\begin{Lemma}\label{denomfactors1}
	Suppose $\{\ell n+\delta_{k,\ell}\}_{n\in\Z_{\geq0}}$ is a generic congruence progression for $P_k(\tau)$ with $k + h = \ell t$ and $h \in \{4,6,8,10\}$. Then if $\phi_R(\bz;\tau)$ is the theta block associated to $h$ in Table~\ref{TableThetaBlocks}, then there is a choice of $a, b \in \Z$ such that if $\bz = (az, bz)$, then  
	\begin{equation}\label{denom}
		\phi_R(\bz;\tau) (\zeta^{\pm (2 - \delta_{\text{odd}}(k))}q)_\infty \cdot \ldots \cdot (\zeta^{\pm (k-2)}q)_\infty (\zeta^{\pm k}q)_\infty \equiv q^{h/24}f(\zeta)(q)_\infty^{\delta_{\text{odd}}(k)}(q^{\ell})_\infty^t \pmod{\Phi_\ell(\zeta)}
	\end{equation}
	for some $f(\zeta) \in \Q[\zeta, \zeta^{-1}]$. 
\end{Lemma}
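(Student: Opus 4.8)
The plan is to establish the equivalent product identity
\[
\phi_R(\bz;\tau)\,D(z;\tau)\equiv q^{h/24}\,g(\zeta)\,(q)_\infty^{\delta_{\text{odd}}(k)}\,(q^{\ell})_\infty^{t}\pmod{\Phi_\ell(\zeta)},
\]
where $D(z;\tau):=(\zeta^{\pm(2-\delta_{\text{odd}}(k))}q)_\infty\cdots(\zeta^{\pm(k-2)}q)_\infty(\zeta^{\pm k}q)_\infty$ is the denominator of the crank function \eqref{first def} and $g(\zeta)$ an explicit Laurent polynomial. Since $\Phi_\ell$ is irreducible over $\Q$, we may fix a primitive $\ell$-th root of unity $\omega$ and, via $\Q[\zeta^{\pm1}]/(\Phi_\ell(\zeta))\cong\Q(\omega)$, prove this identity after the substitution $\zeta\mapsto\omega$; the only facts about $\omega$ used are $\omega^{\ell}=1$ and the factorization $\prod_{r=0}^{\ell-1}(1-\omega^r x)=1-x^{\ell}$, i.e. $\prod_{r=1}^{\ell-1}(1-\omega^r x)=(1-x^{\ell})/(1-x)$.

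First I would write each $\phi_R(\bz;\tau)$ as a $q$-product via \eqref{theta}, \eqref{eta} and $\theta^*=\eta\,\theta(2z)/\theta(z)$, choosing
\[
h=4:\ \bz=(z,z);\qquad h=6,8:\ \bz=(2z,4z);\qquad h=10:\ \bz=(4z,2z),
\]
so that for $h=4$ the $\theta(z)$ in $\theta^*$ cancels the explicit $\theta(z_1)$ and $\phi_R(z,z;\tau)=\eta(\tau)\theta(2z;\tau)$, for $h=8$ the $A_2$-roots $z_1,z_2,z_1+z_2$ become $2z,4z,6z$, and for $h=10$ the $B_2$-roots $z_1,z_2,z_1+z_2,z_1+2z_2$ become $4z,2z,6z,8z$. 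In every case one gets
\[
\phi_R(\bz;\tau)=q^{h/24}\,g(\zeta)\,(q)_\infty^{2}\prod_{n\ge1}\prod_{c\in\Theta}(1-\zeta^{\pm c}q^n),\qquad\Theta=\{2,4,\dots,h-2\},
\]
where the power $(q)_\infty^{2}$ is uniform (the $\eta$-powers and the number of $\theta$-factors always combine to $2$) and $g$ is a nonzero Laurent polynomial (for $h=6$ this uses $a\equiv b\pmod2$, which holds; otherwise it is automatic). Multiplying by $D(z;\tau)$ and collecting the product factors,
\[
\phi_R(\bz;\tau)\,D(z;\tau)=q^{h/24}\,g(\zeta)\,(q)_\infty^{2}\prod_{n\ge1}\prod_{m\in M}(1-\zeta^m q^n),\qquad M:=\bigl(\pm\Theta\bigr)\sqcup\bigl(\pm\{k,k-2,\dots,2-\delta_{\text{odd}}(k)\}\bigr),
\]
a symmetric multiset of integers.

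Now substitute $\zeta=\omega$. If $\mu_r$ denotes the number of $m\in M$ with $m\equiv r\pmod\ell$, then $\prod_{m\in M}(1-\omega^m q^n)=\prod_{r=0}^{\ell-1}(1-\omega^r q^n)^{\mu_r}$, so it suffices to show $\mu_r=t$ for all $r\ne0$ and $\mu_0=t-2+\delta_{\text{odd}}(k)$: granting this, the product over $r$ becomes $(1-q^n)^{\mu_0}\bigl((1-q^{\ell n})/(1-q^n)\bigr)^{t}=(1-q^n)^{\delta_{\text{odd}}(k)-2}(1-q^{\ell n})^{t}$, and substituting back gives the desired identity with $f=g$. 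To compute the $\mu_r$, set $P(X)=\sum_{r=0}^{\ell-1}\mu_r X^r$; since the $\ell$-th roots of unity are distinct and $\deg P<\ell$, $P$ is determined by its values there. One has $P(1)=|M|=(h-2)+(k+\delta_{\text{odd}}(k))=\ell t-2+\delta_{\text{odd}}(k)$, and for every primitive $\ell$-th root $\omega$ an elementary geometric-series evaluation, using $\omega^{\ell}=1$ and $k\equiv-h\pmod\ell$, gives
\[
\sum_{m\in\pm\{k,k-2,\dots,2-\delta_{\text{odd}}(k)\}}\omega^m=\frac{\omega^{2-h}-\omega^{h}}{\omega^{2}-1}-\bigl(1-\delta_{\text{odd}}(k)\bigr),\qquad \sum_{m\in\pm\Theta}\omega^m=\frac{\omega^{h}-\omega^{2-h}}{\omega^{2}-1}-1
\]
(here $\omega^2\ne1$ since $\ell$ is an odd prime), so that $P(\omega)=\sum_{m\in M}\omega^m=\delta_{\text{odd}}(k)-2$. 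Hence $P(X)=t\,\Phi_\ell(X)+\bigl(\delta_{\text{odd}}(k)-2\bigr)$, which is exactly the claim.

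The only genuine computation here is the geometric-series evaluation just displayed; the remainder is bookkeeping — pinning down the product forms and the rescalings $\bz=(az,bz)$ (especially the $h=4$ cancellation turning $\theta(z_1)\theta^*(z_2)$ into $\eta(\tau)\theta(2z;\tau)$), and checking that each $g(\zeta)$ is a nonzero Laurent polynomial, all of which I expect to be routine. The hypothesis that $\{\ell n+\delta_{k,\ell}\}$ is a generic congruence progression enters only in guaranteeing that the relevant $h$ may be taken in $\{4,6,8,10\}$ and that $\mathcal{C}_k$ is the crank given by \eqref{first def}, so that $D(z;\tau)$ is precisely the product on the left of \eqref{denom}; the analogue for \eqref{second def} is handled by a companion lemma.
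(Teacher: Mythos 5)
Your proof is correct and follows essentially the same route as the paper: the same specializations (e.g. $(a,b)=(4,2)$ for $h=10$, $(2,4)$ for $h=8$, and the $h=4$ cancellation giving $\eta(\tau)\theta(2z;\tau)$) and the same reduction modulo $\Phi_\ell(\zeta)$ by grouping the product exponents into $t$ complete residue systems via $\prod_{r=0}^{\ell-1}(1-\zeta^{r}q^{n})\equiv 1-q^{\ell n}\pmod{\Phi_\ell(\zeta)}$. The only difference is in the bookkeeping step: you verify the multiplicities $\mu_r$ uniformly in $h$ by evaluating the counting polynomial at roots of unity, whereas the paper checks case by case that the theta-block exponents fill in the missing residues $\pm(2,4,\dots,h-2)$.
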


\begin{proof}
We begin with the observation that
\begin{equation} \label{pre-p}
(1-q^n)(1-\zeta q^n)(1-\zeta^2 q^n) \cdots (1-\zeta^{\ell - 1} q^n) \equiv 1-q^{\ell n} \pmod{\Phi_\ell(\zeta)}.
\end{equation}
Using that $\Phi_\ell(\zeta)$ is the minimal polynomial for the $\ell^{\text{th}}$ roots of unity, we also have that $q^j \equiv q^m\pmod{\Phi_{\ell}(\zeta)}$ whenever $j \equiv m \pmod{\ell}$, so it turns out that we can generalize \eqref{pre-p}
\begin{equation}\label{p}
	(1-\zeta^{c_0}q^n)(1-\zeta^{c_1}q^n)\cdots(1-\zeta^{c_{\ell-1}}q^n) \equiv 1-q^{\ell n}\pmod{\Phi_{\ell}(\zeta)}
\end{equation}
for any complete set of residues $c_0, c_1, \ldots, c_{\ell - 1}$ modulo $\ell$.
If $k+h = \ell t$, then we will choose $a$ and $b$ so that when $z_1 = az$ and $z_2 = bz$ in our theta block, the denominator will be $t$ products of factors of the form \eqref{p}. We will cover each case separately, determining which factors need to be filled in by the theta block to complete these $t$ products. We will restrict ourselves to the case when $k$ is odd since the proof when $k$ is even is similar.

For $h=10$, the theta block we are considering is $\phi_{B_2}(\bz;\tau)$ by Table~\ref{TableThetaBlocks}. By using the product representations of $\theta(z;\tau)$ and $\eta(\tau)$ given by \eqref{theta} and \eqref{eta} respectively, we can represent this as the product
\begin{equation*}
\phi_{B_{2}}(\bz; \tau) = q^{5/12}f(\zeta)(q)_\infty^2(\zeta^{\pm a}q)_\infty (\zeta^{\pm b}q)_\infty (\zeta^{\pm (a+b)}q)_\infty (\zeta^{\pm (a+2b)}q)_\infty
\end{equation*}
for some $f(\zeta) \in \Q[\zeta, \zeta^{-1}]$. As a result, we see that the left-hand side of \eqref{denom} is
\begin{equation}\label{intermediatedenom}
q^{5/12}f(\zeta)(q)_\infty^2(\zeta^{\pm a}q)_\infty (\zeta^{\pm b}q)_\infty (\zeta^{\pm (a+b)}q)_\infty (\zeta^{\pm (a+2b)}q)_\infty (\zeta^{\pm 1}q)_\infty \cdot \ldots \cdot (\zeta^{\pm (k-2)}q)_\infty (\zeta^{\pm k}q)_\infty.
\end{equation}
In order to obtain the right-hand side of \eqref{denom}, we factor out $q^{5/12}f(\zeta)(q)_\infty$ of \eqref{intermediatedenom} and rewrite the remaining terms as
\begin{align}\label{prodexpansion}
(q)_\infty&(\zeta^{\pm a}q)_\infty (\zeta^{\pm b}q)_\infty (\zeta^{\pm (a+b)}q)_\infty (\zeta^{\pm (a+2b)}q)_\infty (\zeta^{\pm 1}q)_\infty \cdot \ldots \cdot (\zeta^{\pm (k-2)}q)_\infty (\zeta^{\pm k}q)_\infty \nonumber \\
&=\prod\limits_{n\ge 1}(1-q^n)(1-\zeta^{\pm a}q^{n})(1-\zeta^{\pm b} q^n)(1-\zeta^{\pm (a+b)}q^n)(1-\zeta^{\pm (a+2b)}q^n)(1-\zeta^{\pm 1}q^n)\cdots (1-\zeta^{\pm k}q^n).
\end{align}
We now wish to apply \eqref{p} to \eqref{prodexpansion}. To do so, we need
\begin{equation*}
0, \pm a, \pm b, \pm (a+b), \pm(a+2b), \pm 1, \pm 3, \ldots, \pm (k-2), \pm k
\end{equation*}
to form $t$ complete sets of residues modulo $\ell$. One can check that to do this, we need $\pm a, \pm b, \pm (a+b), \pm (a+2b)$ to form the residues $\pm (2,4,6,8) \pmod{\ell}$. By choosing
\begin{equation*}
(a,b,a+b,a+2b) = (4,2,6,8),
\end{equation*}
we have $t$ complete sets of residues. As a result, we may apply \eqref{p} to \eqref{prodexpansion} and see that it is equivalent to $(q^{\ell})_\infty^t \pmod{\Phi_{\ell}(\zeta)}$.

We use the same proof technique in the other cases as well. When $h=8$, the missing residues are
\begin{equation*}
\pm (2,4,6) \pmod{\ell},
\end{equation*}
and we are using the theta block $\theta_{A_{2}}(\bz; \tau)$. To fill in these residues, we may choose
\begin{equation*}
(a,b,a+b) = (2,4,6)
\end{equation*}
to get a set of $t$ complete residues modulo $\ell$. This will allow us to complete the proof in the same way using \eqref{p}. Similarly, when $h=6$ and $h=4$, the missing residues are
\begin{equation*}
\pm (2,4) \pmod{\ell}
\end{equation*}
and $\pm 2$ respectively. Both of these theta blocks allow us specialize each elliptic variable independent of the others, so we can fill in the needed products.   
\end{proof}

Note that when $h=14$, this crank function does not necessarily explain the corresponding congruence because the missing residues are
\begin{equation*}
\pm(2,4,6,8,10,12) \pmod{\ell},
\end{equation*}
and there is no choice of
\begin{equation*}
(a,b,a+b,2a+b,3a+b,3a+2b)
\end{equation*}
coming from $\theta_{G_{2}}(\bz; \tau)$ that always fills these in. This is why we have to consider the case where there is a prime $\ell \equiv 2 \pmod{3}$ that divides $k+14$ separately.

\begin{Lemma}\label{denomfactors2}
	Suppose $\{\ell n+\delta_{k,\ell}\}_{n\in\Z_{\geq0}}$ is a generic congruence progression for $P_k(\tau)$ with $k+h = \ell t$ and $h \in \{4,6,10,14\}$. Then if $\phi_R(\bz;\tau)$ is the theta block associated to $h$ in Table~\ref{TableThetaBlocks}, then there is a choice of $a, b \in \Z$ such that if $\bz = (az, bz)$, then  
	\begin{equation*}
		\phi_R(z;\tau) (\zeta^{\pm (2 - \delta_{\text{odd}}(k))}q)_\infty \cdot \ldots \cdot (\zeta^{\pm (k-2)}q^n)_\infty (\zeta^{\pm (k+2)}q)_\infty \equiv q^{h/24}f(\zeta)(q)_\infty^{\delta_{\text{odd}}(k)}(q^{\ell})_\infty^t \pmod{\Phi_\ell(\zeta)}
	\end{equation*}
	for some $f(\zeta) \in \Q[\zeta, \zeta^{-1}]$. 
\end{Lemma}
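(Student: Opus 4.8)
The approach is to run the argument of Lemma~\ref{denomfactors1} line by line, the one structural change being that we now use the crank generating function \eqref{second def} in place of \eqref{first def}; accordingly, the relevant product of denominator factors is $(\zeta^{\pm(2-\delta_{\text{odd}}(k))}q)_\infty\cdots(\zeta^{\pm(k-2)}q)_\infty(\zeta^{\pm(k+2)}q)_\infty$, which ends in $(\zeta^{\pm(k+2)}q)_\infty$ where Lemma~\ref{denomfactors1} had $(\zeta^{\pm k}q)_\infty$. As before, one uses \eqref{theta} and \eqref{eta} to write the theta block from Table~\ref{TableThetaBlocks} attached to $h$, specialized at $\bz=(az,bz)$, as $q^{h/24}f(\zeta)(q)_\infty^{2}\prod_c(\zeta^{\pm c}q)_\infty$ with $f\in\Q[\zeta,\zeta^{-1}]$ and $c$ running over the linear root forms $a,\,b,\,a+b,\dots$ of $R$ (for $h=4$, through the cancelled expression $\widetilde{\theta}_{A_1\oplus A_1}(az;\tau)=\prod_{n\ge1}(1-q^n)^2(1-\zeta^{\pm a}q^n)$, exactly as in Section~\ref{wt1}; the parity conditions needed for $f$ to be a genuine Laurent polynomial will be satisfied by the choices of $(a,b)$ made below). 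After multiplying by the denominator factors, extracting the prefactor $q^{h/24}f(\zeta)(q)_\infty^{\delta_{\text{odd}}(k)}$, and invoking \eqref{p} termwise, the statement reduces --- precisely as in Lemma~\ref{denomfactors1} --- to choosing $a,b\in\Z$ so that the resulting multiset of $\zeta$-exponents (those from the crank denominator, the root forms $\pm c$, and the $2-\delta_{\text{odd}}(k)$ residual exponents $0$ left by the theta block's $(q)_\infty^2$) forms $t$ complete residue systems modulo $\ell$.

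Since $k+h=\ell t$ we have $k\equiv-h\pmod\ell$, so the new factor $(\zeta^{\pm(k+2)}q)_\infty$ contributes the residues $\pm(h-2)$ modulo $\ell$, whereas $(\zeta^{\pm k}q)_\infty$ contributed $\pm h$ in the first definition. Hence, comparing with the residue count in the proof of Lemma~\ref{denomfactors1} (where the theta block had to supply the even residues $\pm\{2,4,\dots,h-2\}$), here it must supply $\pm\{2,4,\dots,h-4,h\}$, i.e.\ the same list with $h-2$ replaced by $h$. For $h\in\{4,6,10\}$ there is nothing new to do: the full exponent multiset coincides modulo $\ell$ with the one in Lemma~\ref{denomfactors1} --- the last crank factor and one theta factor have simply swapped roles --- so those cases go through verbatim with the choices $a=4$ for $h=4$, $(a,b)=(2,6)$ for $h=6$, and $(a,b)=(2,4)$ for $h=10$, the latter giving $\{a,b,a+b,a+2b\}=\{2,4,6,10\}$ as required.

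The genuinely new case, and the reason for stating this lemma separately, is $h=14$, where $R=G_2$ and $\theta_{G_2}$ contributes the root forms $\{a,b,a+b,2a+b,3a+b,3a+2b\}$; we need these to realize the set $\{2,4,6,8,10,14\}$, and the choice $(a,b)=(2,4)$ does it:
\[
\{\,a,\ b,\ a+b,\ 2a+b,\ 3a+b,\ 3a+2b\,\}=\{\,2,\,4,\,6,\,8,\,10,\,14\,\}.
\]
This is exactly where the two definitions part ways: as recorded in the remark after Lemma~\ref{denomfactors1}, no admissible choice makes $\theta_{G_2}$ realize $\{2,4,6,8,10,12\}$ for all relevant $\ell$, since such a choice would force that equality of multisets in $\Z$, forcing in turn $24a^2+24ab+8b^2=a^2+b^2+(a+b)^2+(2a+b)^2+(3a+b)^2+(3a+2b)^2=4+16+36+64+100+144=364$, which is impossible because $8\nmid364$; by contrast the new target has sum of squares $4+16+36+64+100+196=416=8\cdot52$, attained at $(a,b)=(2,4)$. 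With $(a,b)$ fixed in each case, it remains only to verify that the full list of $\zeta$-exponents forms $t$ complete residue systems modulo $\ell$ --- a direct count entirely parallel to the one in the proof of Lemma~\ref{denomfactors1}, and the only slightly tedious point. Applying \eqref{p} then collapses the product to $q^{h/24}f(\zeta)(q)_\infty^{\delta_{\text{odd}}(k)}(q^\ell)_\infty^t\pmod{\Phi_\ell(\zeta)}$, as asserted; the even-$k$ case is identical, now with $\delta_{\text{odd}}(k)=0$.
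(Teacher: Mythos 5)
Your proof is correct and follows essentially the same route as the paper's: multiply by the theta block from Table~\ref{TableThetaBlocks}, observe that the factor $(\zeta^{\pm(k+2)}q)_\infty$ shifts the residues the block must supply to $\pm\{2,4,\dots,h-4,h\}$, and fill them with $(a,b)=(2,4)$ for $h=14$ (giving $\{2,4,6,8,10,14\}$) and for $h=10$ (giving $\{2,4,6,10\}$), the cases $h=4,6$ being trivial, before collapsing the product via \eqref{p}. The sum-of-squares aside explaining why $\{2,4,6,8,10,12\}$ is unreachable is a pleasant extra but not needed for the lemma itself.
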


\begin{proof}
	This proof uses the same methods as the proof of Lemma~\ref{denomfactors1}, where we are able to use \eqref{p} in order to prove our equivalence. We again restrict ourselves to the case when $k$ is odd and consider case-by-case how to obtain a set of $t$ complete residues. When $h=14$, the missing residues are now
	\begin{equation*}
		\pm (2,4,6,8,10,14) \pmod{\ell},
	\end{equation*}
	and the theta block we multiply by is $\phi_{G_{2}}(\bz; \tau)$. Hence, we can choose
	\begin{equation*}
		(a,b,a+b, 2a+b, 3a+b, 3a+2b) = (2,4,6,8,10,14)
	\end{equation*}
	to fill in the missing residues. When $h=10$, the missing terms are
	\begin{equation*}
		\pm (2,4,6,10) \pmod{\ell},
	\end{equation*}
	and the theta block we multiply by $\phi_{B_{2}}(\bz; \tau)$. Hence, we can choose
	\begin{equation*}
		(a,b,a+b, a+2b) = (2,4,6,10)
	\end{equation*}
	to fill in the missing residues. Just as in the proof of Lemma~\ref{denomfactors1}, the cases $h =4,6$ are seen to be trivial. 
\end{proof}

Note that when $h=8$, the missing residues are
\begin{equation*}
	\pm(2,4,8) \pmod{\ell}.
\end{equation*}
Here, we have the theta block $\phi_{A_{2}}(\bz; \tau)$, so we would need to fill $\pm (2,4,8)$ using
\begin{equation*}
	(a,b,a+b),
\end{equation*}
which cannot be done in general. This is why we need the separate cases of Lemma~\ref{denomfactors1} and Lemma~\ref{denomfactors2}.

We also note that both definitions of a crank function explain the congruences coming from $k \equiv -3 \pmod{\ell}$ and $k \equiv -1 \pmod{\ell}$ using the same factoring argument for the denominator after multiplying by the appropriate function. The first follows from the Jacobi triple product while the second follows from Euler's pentagonal number theorem.  Definition \eqref{first def} also explains the trivial congruences when $k \equiv 0 \pmod{\ell}$ as the denominator in that case already factors by using the Freshman's Dream.

\begin{proof}[Proof of Theorem~\ref{MainThm}]
Combining \eqref{crankprod}, \eqref{first def}, \eqref{second def}, Lemma~\ref{denomfactors1}, and Lemma~\ref{denomfactors2}, we have the equivalence
\begin{equation*}
\mathcal{C}_k(z;\tau) = \frac{\phi_R(\bz;\tau)\mathcal{C}_k(z;\tau)}{\phi_R(\bz;\tau)} \equiv \frac{\phi_R(\bz;\tau)}{f(\zeta)(q^\ell)_\infty} \pmod{\Phi_{\ell}(\zeta)}.
\end{equation*}
Since $f(\zeta)(q^{\ell})_\infty$ is only supported on powers of $q^{\ell}$, we see that $[q^{\ell n + \delta_{k,\ell}}]\mathcal{C}_k(z;\tau)$ is a sum of terms of the form $[q^{\ell n + \delta_{k,\ell}}]\phi_R(z;\tau)$, which are $0$ by Lemma~\ref{vanishingcoeffs}. Hence, we conclude that $[q^{\ell n + \delta_{k,\ell}}]\mathcal{C}_k(z;\tau) \equiv 0 \pmod{\Phi_{\ell}(\zeta)}$.
\end{proof}

\section{Examples and discussion}\label{FinalSection}

While we have proved that the crank generating functions \eqref{first def} and \eqref{second def} explain most of the congruences coming from Theorem~\eqref{c} simultaneously, there may be other choices of $a_1, a_2, \dots a_{\frac{k+\delta_{\text{odd}}}{2}}$ in \eqref{general Ck} that explain these congruences as well. In this section we give examples showing how one might find a crank function for any given color using the theory of theta blocks.  In particular, both $k=3$ and $k=9$ have singular congruences, but there is a crank function that explains all the Ramanujan-like congruences for these colors simultaneously for each $\ell$.  For $k=5$ we show that the choice given in \eqref{first def} works to explain all of the congruences, but there are actually three other choices that work as well.  We have also highlighted $k=33$ as the first color where the authors have not been able to identify a crank which explains all of the congruences.

\subsection{$k=3$} In this first case, we give a more detailed description of how to find other crank generating functions that explain the congruences of Theorem~\ref{c}. Recall that we are only studying crank generating functions of the form \eqref{general Ck}. There are of course cranks that could exist whose generating functions are not of this form, but our reason for studying these is that they have nice representations as products and quotients of Pochhammer symbols. Possibly after cancellation, we will be able to group terms in such a way that we can use \eqref{p} to see that the denominator is only supported on powers of $q^{\ell}$ modulo $\Phi_\ell (\zeta)$. When $k=3$ for example, this generating function will be of the form 
\begin{equation*}
	\mathcal{C}(0;\tau)\mathcal{C}(a_1 z;\tau)\mathcal{C}(a_2 z;\tau) = \frac{(q)_\infty}{(\zeta^{\pm a_1}q)_\infty (\zeta^{\pm a_2 }q)_\infty}. 
\end{equation*}
The partition congruences in this case come from 
\begin{align*}
	3 &\equiv -8 \pmod{11},\\
	3 &\equiv - 14 \pmod{17}.
\end{align*}
In other words, we have $p_3(11n + 7) \equiv 0 \pmod{11}$ and $p_3(17n + 15) \equiv 0 \pmod{17}$. In order to use Lemma~\ref{vanishingcoeffs}, we will multiply by the numerator and denominator by a theta block $\phi_R$ determined by Table~\ref{TableThetaBlocks}. Because $3 \equiv -8 \pmod{11}$, this means that the theta block for $\ell = 11$ is
\begin{equation*}
	\phi_{A_2}(\bz;\tau) = (q)_\infty (\zeta^{\pm a} q)_\infty (\zeta^{\pm b}q)_\infty (\zeta^{\pm (a + b)}q)_\infty.
\end{equation*}
What this tells us is that we need
\begin{equation*}
	0, \pm a, \pm b \pm (a + b), \pm a_1, \pm a_2
\end{equation*}
to be a complete set of residues for some choices of $a, b, a_1, a_2 \in \Z$ in order for the denominator to factor as we would like. Similarly, the theta block $\phi_R$ coming from $\ell = 17$ is
\begin{equation*}
	\phi_{G_2}(\bz;\tau) = (q)_\infty (\zeta^{\pm a'} q)_\infty (\zeta^{\pm b'}q)_\infty (\zeta^{\pm (a' + b')}q)_\infty (\zeta^{\pm (2a' + b')}q)_\infty (\zeta^{\pm (3a' + b')}q)_\infty (\zeta^{\pm (3a'+ 2b')}q)_\infty.
\end{equation*}
Hence, we need
\begin{equation*}
	0, \pm a', \pm b', \pm (a' + b'), \pm (2a' + b'), \pm(3a' + b'), \pm(3a' + 2b'), \pm a_1, \pm a_2
\end{equation*}
to form a complete set of residues mod $17$ for some choice of $a', b', a_1, a_2 \in \Z$. Notice that the choices of $a,b$ and $a',b'$ may be different since the theta blocks are different, but the choice of $a_1,a_2$ must be the same since we want the crank generating function we are trying to find to be the same for both of these congruences. To determine if such a function exists, it actually only takes a finite check since $a,b$ are only unique mod $11$, $a',b'$ are only unique mod $17$, and $a_1,a_2$ are only unique mod $11\cdot 17$. We decided to narrow down the search space for many of our checks by letting $a_1 = 3 > a_2$. This restriction can of course be relaxed, but we needed to create fewer choices for higher values of $k$ later on using a similar restriction. With this caveat, we found the only possible option for $k = 3$ that creates a crank is when $a_2 = 2$. One can check that this gives a crank for both congruences by letting $a = 1$, $b = 4$, $a' = 1$, $b' = 5$. In other words, this allows us to show that 
\begin{equation*}
	\mathcal{C}(3z, 2z;\tau) = \frac{(q)_\infty}{(\zeta^{\pm 2}q)_\infty (\zeta^{\pm 3}q)_\infty}
\end{equation*}
should be the crank generating function for $k = 3$ in order to explain the two congruences simultaneously.

\subsection{$k=5$}
We have 
\begin{align*}
5 &\equiv 0 \pmod{5}, \\
5 & \equiv -6 \pmod{11},
\end{align*} 
so we want to look at
\begin{equation*}
\theta_{A_{1} \oplus A_{1}}(\bz; \tau) = \theta(z_{1}; \tau) \theta(z_{2}; \tau),
\end{equation*}
We have a lot of freedom here as we can choose $a$ and $b$ to be any integers.  We choose $a_{1} =5$ and want to choose $a_{2}$ and $a_{3}$ such that $\pm a_{2}$ and $\pm a_{3}$ are all in different residue classes modulo $\pmod{11}$.  We also have $5 \equiv 0 \pmod{5}$ so we want to choose $\pm a_{2}, \pm a_{3} \not\equiv 0 \pmod{5}$ to be in different residue classes modulo $5$.  Therefore the choices for the $5$-colored crank are
\begin{equation*}
\mathcal C(5z,4z,3z; \tau), \quad \mathcal C(5z,4z,2z; \tau), \quad \mathcal C(5z,3z,z; \tau), \quad \mathcal C(5z,2z,z; \tau).
\end{equation*}

\subsection{$k=9$} \label{k=9}
We have
\begin{align*}
9 & \equiv -1 \pmod{5}, \\
9 & \equiv -8 \pmod{17}, \\
9 &\equiv -10 \pmod{19}, \\
9 & \equiv -14 \pmod{23},
\end{align*}
so we look at the theta blocks
\begin{align*}
\theta_{A_{2}}(\bz; \tau) &= \eta(\tau)^{-1} \theta(z_{1}; \tau) \theta(z_{2}; \tau) \theta(z_1 + z_2; \tau), \\
\theta_{B_{2}}(\bz; \tau) &= \eta(\tau)^{-2}\theta(z_{1}; \tau) \theta(z_{2}; \tau) \theta(z_{1} + z_{2}; \tau) \theta(z_{1} + 2z_{2}; \tau), \\
\theta_{G_{2}}(\bz; \tau) &= \eta(\tau)^{-4} \theta(z_{1}; \tau) \theta(z_{2}; \tau) \theta(z_{1} + z_{2}; \tau) \theta(2z_{1} + z_{2}; \tau) \theta(3z_{1} + z_{2}; \tau) \theta(3z_{1} + 2z_{2}; \tau).
\end{align*}
When multiplying by $\theta_{G_2}$, we need to choose the $a_i$ so that $\pm(a,b,a+b, 2a+b, 3a+b, 3a+2b)$ all live in different residue classes modulo $23$ and all avoid $\pm 9 \pmod{23}$.  The possible choices are
\begin{align*}
(a,b,a+b, 2a+b, 3a+b, 3a+2b) &= (3,1,4,7,10,11), \\
(a,b,a+b, 2a+b, 3a+b, 3a+2b) &= (3,2,5,8,11,13) \equiv (3,2,5,8,11,-10) \pmod{23}, \\
(a,b,a+b, 2a+b, 3a+b, 3a+2b) &=(8,5,13,21,29,34) \equiv (8,5,-10,-2,6,11) \pmod{23},
\end{align*}
which leaves the choices
\begin{align*}
(a_{1}, a_{2}, a_{3}, a_{4}, a_{5}) &= (9,8,6,5,2), \\
(a_{1}, a_{2}, a_{3}, a_{4}, a_{5}) &=(9,7,6,4,1), \\
(a_{1}, a_{2}, a_{3}, a_{4}, a_{5}) &=(9,7,4,3,1)
\end{align*}
for the $9$-colored crank if we just look modulo $23$.  Modulo $19$ we multiply by $\theta_{B_{2}}$ so we need $\pm(a,b,a+b,a+2b)$ to all be in different residue classes modulo $19$ and avoid the possible choices above.  All three possible choices still work because we can choose
\begin{align*}
(a,b,a+b,a+2b) &= (1,3,4,7), \\
(a,b,a+b,a+2b) &=(2,3,5,8), \\
(a,b,a+b,a+2b) &=(2,6,8,14) \equiv (2,6,8,-5) \pmod{19}.
\end{align*}
Finally, we look modulo $17$ at $\theta_{A_{2}}$.  Notice that $(9,8,6,5,2) \equiv (-8,8,6,5,2) \pmod{17}$, so it can be eliminated as a choice because $\pm 9$ and $\pm 8$ will fill out the same moduli classes modulo $17$ (and thus will prevent us from having a complete set of residues).  The other choices become
\begin{align*}
(9,7,6,4,1) &\equiv (-8,7,6,4,1) \pmod{17}, \\
(9,7,4,3,1) &\equiv (-8,7,4,3,1) \pmod{17},
\end{align*}
so we need $\pm (a,b,a+b) \equiv \pm(2,5,6)$ or $\pm (2,3,5) \pmod{17}$.  The first one is not possible while the second one is so we must have the only possible crank explaining all of these congruences is
\begin{equation*}
\mathcal C(9z, 7z, 6z, 4z, z; \tau).
\end{equation*}  
We note that this crank function also explains the congruences $p_{9}(5n+\delta) \equiv 0 \pmod{5}$ for $\delta =3,4$ in the following way.  We multiply the numerator and denominator of $\mathcal C(9z,7z,6z,4z,z; \tau)$ by $\prod_{n \geq 1}(1-q^n)(1-\zeta^{\pm 1} q^n)^2$ to obtain
\begin{equation*}
\prod_{n \geq 1} \frac{(1-q^n)^2 (1-\zeta^{\pm 1} q^n)^2}{(1-q^n)(1-\zeta^{\pm 1}q^n)^{3} (1-\zeta^{\pm 4} q^n)(1-\zeta^{\pm 6} q^n)(1-\zeta^{\pm 7} q^n)(1-\zeta^{\pm 9}q^n)}.
\end{equation*}
When we set $\zeta=\zeta_{5}$ each term of the denominator becomes 
\begin{equation*}
(1-\zeta_{5}^{\pm 1} q^n)^5 (1-q^n)(1-\zeta_{5}^{\pm 1}q^n)(1-\zeta_{5}^{\pm 2} q^n) \equiv (1-q^{5n})^2 \pmod{5}.
\end{equation*}
Meanwhile, the numerator was discussed in Section \ref{wt1}.  We must have $\frac{\left(\alpha + \frac{1}{2} \right)^2 + \left(\beta + \frac{1}{2} \right)^2}{2} - \frac{1}{4} \equiv 3, 4 \pmod{5}$ in order to get the coefficients $\left[q^{5n+3} \right]$ and $\left[q^{5n+4} \right]$ respectively.  This is equivalent to $(\alpha -2)^2 + (\beta -2)^2 \equiv -1, 1 \pmod{5}$.  In either case one of $\alpha$ or $\beta$ must be $2 \pmod{5}$ which causes the coefficient to vanish.  These coefficients are actually divisible by $\Phi_{25}(\zeta)$ rather than $\Phi_{5}(\zeta)$ due to us working modulo $5$ above.

\subsection{$k=33$}
We have
\begin{align*}
33 & \equiv 0 \pmod{11}, \\
33 & \equiv -1 \pmod{17}, \\
33 & \equiv -8 \pmod{41}, \\
33 & \equiv -10 \pmod{43},\\
33 & \equiv -14 \pmod{47}, 
\end{align*}
so we want to look at $\theta_{A_{2}}(\bz; \tau), \theta_{B_{2}}(\bz; \tau)$, and $\theta_{G_{2}}(\bz; \tau)$.  When multiplying by $\theta_{G_{2}}$, we need to choose $(a, b, a+b, 2a+b, 3a+b, 3a+2b)$ that all live in different residue classes modulo $47$.  Some examples of choices are
\begin{align*}
 (a, b, a+b, 2a+b, 3a+b, 3a+2b) & \equiv (1,7,8,9,10,17) \pmod{47}, \\
 (a, b, a+b, 2a+b, 3a+b, 3a+2b) & \equiv (2,6,8,10,12,18) \pmod{47}, \\
 (a, b, a+b, 2a+b, 3a+b, 3a+2b) &\equiv (8,2,10,18,-21,-19) \pmod{47}, \\
 (a, b, a+b, 2a+b, 3a+b, 3a+2b) & \equiv (10,8,18,-19,-9,-1) \pmod{47}.
 \end{align*}
Recall that we are trying to choose $a_1, \cdots, a_{17}$ to fill out the residue classes modulo all of the above primes. For $\ell = 47$, this means that  
\begin{align}
	\pm a, \pm b, \pm a+b, &\pm 2a+b, \pm 3a+b, \pm 3a+2b, \pm a_1, \cdots, \pm a_{17} \nonumber \\
	&\equiv \pm 1, \pm 7, \pm 8, \pm 9, \pm 10, \pm 17, \pm a_1, \cdots, \pm a_{17} \pmod{47} \label{Eqn: residue classes}
\end{align}
needs to be a complete set of residues modulo $47$. Working modulo $43\cdot 47$, one may conclude that some $a_i$ needs to be equal to $33\pmod{43\cdot 47}$ in order to fill out the residue classes modulo $43$ (since $10\pmod{43}$ will already by chosen in every possible case). Working modulo $47$ again, this will then mean that another $a_j$ must be $14\pmod{47}$. By continued reasoning in this manner, we end up needing some $a_k \equiv 17\pmod{47}$, which means \eqref{Eqn: residue classes} will not be a complete set of residues modulo $47$. The idea why the remaining cases do not work is similar. For the second choice, having $18$ appear means that we cannot have an $a_{i}$ equal to $18$ or $29$.  However, not having $29$ implies we need to choose $12$ after looking modulo $41$, but this is not available so this choice cannot work.
For the third choice, the $10$ implies one $a_{i}$ must be $33$ which then tells us that $14$ cannot be chosen.  However, $18$ appearing tells us we can't choose $29$ either, which when looking modulo $43$ tells us we must choose $14$ as an $a_{i}$ and so this choice cannot work either.  The exact same argument disqualifies the last choice as well. $k=33$ is the first color where the authors have not been able to identify a crank function that explains  the singular congruences.

\subsection{Colored partitions with $j$ colors into even parts}\label{examples}
As an example of the usefulness of the theory of theta blocks we will give a short example of how to construct crank functions which explain congruences for other partition functions.  For simplicity we will only focus on congruences modulo $5$ and $7$, however the techniques used for the $k$-colored partition function for other moduli should extend naturally here as well.  Define $p_{k,j}(n)$ to be the number of $k+j$ colored partitions of $n$ where $j$ of the colors are partitions into even parts.  The generating function is given by
\begin{equation} \label{2j}
\sum_{n \geq 0} p_{k,j}(n) q^n = \prod_{n \geq 1} \frac{1}{(1-q^n)^k (1-q^{2n})^j}.
\end{equation}
Using standard techniques in the theory of modular forms one can confirm that Table \ref{2ColorCongruences5} and Table \ref{2ColorCongruences7} give the values of $\delta$ such that $p_{k,j}(5n + \delta) \equiv 0 \pmod{5}$ and $p_{k,j}(7n + \delta) \equiv 0 \pmod{7}$ respectively. \\

\begin{center}
\begin{table}[h]
 \begin{tabular}{||c| c| c| c | c | c||} 
 \hline
  $k \diagdown j \pmod{5}$ & 0 & 1 & 2 & 3 & 4 \\ [0.5ex] 
 \hline\hline
 0 & None & 3 & 1, 3, 4 & None & 1, 3 \\ 
 \hline
 1 & 4 & None & None & 2, 4 & None \\
 \hline
 2 & 2, 3, 4 & None & 4 & None & None \\
 \hline
 3 & None & 2, 3 & None & None & None \\
 \hline
 4 & 3, 4 & None & None & None & None \\ [1ex] 
 \hline
\end{tabular}
 \caption{Congruences for $p_{k,j}(5n + \delta) \pmod{5}$.}
\label{2ColorCongruences5}
\end{table}
\end{center}

\begin{center}
\begin{table}[h]
 \begin{tabular}{||c| c| c| c | c | c| c| c||} 
 \hline
  $k \diagdown j \pmod{7}$ & 0 & 1 & 2 & 3 & 4 & 5 & 6 \\ [0.5ex] 
 \hline\hline
 0 & None & 3 & None & None & 1, 3, 4, 5 & None & 1, 5, 6 \\ 
 \hline
 1 & 5 & None & 4 & None & None & 2, 4, 5 & None \\
 \hline
 2 & None & 6 & 2, 3, 4, 6 & None & None & None & None \\
 \hline
 3 & None & None & None & None & None & 2 & None \\
 \hline
 4 & 2, 4, 5, 6 & None & None & None & 4 & None & None \\ 
 \hline
 5 & None & 3, 5, 6 & None & 6 & None & 5 & None \\
 \hline
 6 & 3, 4, 6 & None& None& None& None& None & None \\ [1ex] 
 \hline
\end{tabular}
 \caption{Congruences for $p_{k,j}(7n + \delta) \pmod{7}$.}
\label{2ColorCongruences7}
\end{table}
\end{center}

\vspace{-1cm}

One can use combinations of $\theta$, $\theta^{*}$ and the theta blocks in Table \ref{TableThetaBlocks} shifted by half-periods (i.e. $z \mapsto z + \frac{1}{2}, z+\frac{\tau}{2}$, or $z + \frac{1}{2} + \frac{\tau}{2}$) to construct crank functions for the colored partition congruences listed above.  The last author is investigating these shifted theta blocks further in a follow-up work.  Define
\begin{align*}
\mathcal{C}_{k,j}&(z_{1}, z_{2}, \dots, z_{\frac{k + \delta_{\text{odd}}(k)}{2}}; u_{1}, u_2, \dots, u_{\frac{j + \delta_{\text{odd}}(j)}{2}}; \tau) \\
&:= \mathcal{C}(0; \tau)^{\frac{k-\delta_{\text{odd}}(k)}{2}} \mathcal{C}(0; 2 \tau)^{\frac{j - \delta_{\text{odd}}(j)}{2}} \prod_{i=1}^{\frac{k + \delta_{\text{odd}}(k)}{2}} \prod_{j=1}^{\frac{j + \delta_{\text{odd}}(j)}{2}} \mathcal{C}(z_i ; \tau) \mathcal{C}(u_j ; 2 \tau)
\end{align*}
in analogy to \eqref{general Ck}.  

\begin{center}
\begin{table}[h]
 \begin{tabular}{||c|c||} 
 \hline
  $(k, j) \pmod{\ell}$ & Theta block \\ [0.5ex] 
 \hline\hline
 $(1,3) \pmod{5}$ &  $\theta \left( z + \frac{1}{2}; \tau \right)$ \\ 
 \hline
 $(2,2) \pmod{5}$ & $\theta(z_1; \tau) \theta(z_2; 2 \tau)$  \\
 \hline
 $(3,1) \pmod{5}$ & $\theta \left( z + \frac{\tau}{2}; \tau \right)$ \\
 \hline
 $(1,2) \pmod{7}$ & $\theta^{*} \left( z_1 + \frac{1}{2}; \tau \right) \theta^{*} (z_2; \tau)$ \\
 \hline
 $(1,5) \pmod{7}$ & $\theta \left(z + \frac{1}{2} \right)$ \\ 
 \hline
  $(2,1) \pmod{7}$ & $\theta^{*} \left(z_1 + \frac{1}{2}; \tau \right) \theta^{*}(z_2; 2 \tau)$ \\
 \hline
  $(2,2) \pmod{7}$ & $\theta^{*} \left(z + \frac{1}{2} \right)$ \\
 \hline
  $(3,5) \pmod{7}$ & $\theta^{*} \left( z_1 + \frac{\tau}{2}; \tau \right) \theta^{*} \left( z_2 + \frac{1}{2} + \frac{\tau}{2}; \tau \right)$ \\
 \hline
  $(4,4) \pmod{7}$ & $\theta(z_1; \tau) \theta(z_2; 2 \tau)$ \\
 \hline
  $(5,1) \pmod{7}$ & $\theta(z + \tau; 2 \tau)$ \\
 \hline
  $(5,3) \pmod{7}$ & $\theta^{*} \left(z_1 + \frac{1}{2}; 2 \tau \right) \theta^{*} \left(z_2 + \frac{1}{2} + \tau; 2 \tau \right)$ \\
 \hline
  $(5,5) \pmod{7}$ & $\theta(z_1; \tau) \theta \left( z_2 + \frac{1}{2}; \tau \right)$ \\
 [1ex] 
 \hline
\end{tabular}
 \caption{Shifted theta blocks for $p_{k,j}(n)$.}
\label{ShiftedThetaBlocks}
\end{table}
\end{center} 

\vspace{-1cm}

Table \ref{ShiftedThetaBlocks} gives the shifted theta blocks needed to construct crank functions for $kj \not\equiv 0 \pmod{\ell}$.  These details are left to an interested reader.  The crank function
\begin{align*}
	&\mathcal{C}_{k,j}(z_{1}, z_{2}, \dots, z_{\frac{k + \delta_{\text{odd}}(k)}{2}}; u_{1}, u_2, \dots, u_{\frac{j + \delta_{\text{odd}}(j)}{2}}; \tau) \\
	&=\mathcal{C}_{k,j}(kz, (k-2)z, \dots, (2- \delta_{\text{odd}}(k))z; (2j+k)z, (2j+k-2)z, \dots, (j+k+2-\delta_{\text{odd}}(j))z; \tau)
\end{align*}
simultaneously explains all of the congruences modulo $5$ and $7$ unless $j \equiv -2 \pmod{7}$ or $(k,j) \equiv (5,3) \pmod{7}$.  Crank functions can be constructed to explain each individual congruence, but the authors have not identified a family that simultaneously explains all congruences.  Finding a pattern which gives a natural family has been left as an open problem. 

We note that the choice of theta block in Table \ref{ShiftedThetaBlocks} is not necessarily unique.  For example, when $(k,j) \equiv (5,3) \pmod{7}$ one could also use the theta block $\theta_{B_2} \left( z_1, z_2 +\frac{1}{2}; \tau \right)$ to construct a crank function. 

\subsection{Questions for further study}
\begin{enumerate}
\item How can one find crank functions to explain all of the singular congruences? Can a family of cranks that simultaneously ``explains''
all colored partition congruences be found? Similarly, can one explain the superexceptional congruences of Boylan?
\item Can one use these methods to find further families of crank functions in combinatorics?
\item What are the combinatorial, congruence, or analytic properties of these new crank functions (as have been extensively studied for 
ranks and cranks of partitions)?
\item Similar to the Kac-Wakimoto interpretation of the powers of the crank generating function as in \cite{BringmannCreutzigRolen}, is there an interesting representation-theoretic interpretation
of the crank functions here?
\item Is there interesting number-theoretic content to the $\ell$-dissections of these crank functions (see, e.g., \cite{AndrewsBerndtYourTopHitParade}). 
\end{enumerate}


\begin{thebibliography}{9}

\bibitem{AhlgrenBoylan}
S. Ahlgren and M. Boylan,  {\it Arithmetic properties of the partition function}, Invent. Math., {\bf 153} (2003) no. 3, 487--502.

\bibitem{Andrews}
G. E. Andrews, A survey of multipartitions: congruences and identities, in ``Surveys in number
theory", Dev. Math. 17 (Springer, New York, 2008), 1-19.

\bibitem{AndrewsBerndtYourTopHitParade}
G. E. Andrews and B. C. Berndt {\it Your hit parade: the top ten most fascinating formulas in Ramanujan's Lost Notebook}, Not. Amer. Math. Soc. {\bf 55} (2008), 18--30.

\bibitem{AndrewsGarvan}
G. E. Andrews and  F. G. Garvan, {\it Dyson's crank of a partition}, Bulletin (New Series) of the American Math. Soc. {\bf 18} (1988) (2).

\bibitem{Apostol}
T. M. Apostol, {\it Modular functions and Dirichlet series in number theory}, Springer-Verlag, 1976. 

\bibitem{AtkinSD}
A. O. L. Atkin and  H. P. F. Swinnerton-Dyer, {\it Some properties of partitions}, Proceedings of the London Math. Soc. {\bf 66} (1954) no. 4, 84--106. 

\bibitem{Boylan} Matthew Boylan, {\it Exceptional congruences for powers of the partition function}, Acta Arithmetica {\bf 111} (2004), 187-203.

\bibitem{BringmannCreutzigRolen}
K. Bringmann, T. Creutzig, and L. Rolen, {\it Negative Index Jacobi Forms and Quantum Modular Forms}, Research in the Mathematical Sciences, {\bf 1} (2014), 1:11.

\bibitem{BringmannOno}
K. Bringmann and K. Ono, {\it Dyson's ranks and Maass forms}, Annals of Mathematics {\bf 171} (2010), 419--449.

\bibitem{DW}
M.L. Dawsey and I. Wagner, {\it Congruences for powers of the partition function}, Annals of Combinatorics  {\bf 21} (2017) no. 1, 83--93.

\bibitem{EZ}
M. Eichler and D.B. Zagier, {\it The Theory of Jacobi Forms}, Progress in Mathematics, no. 55, Birkha\"user, 1985.

\bibitem{Garvan1}
F. G. Garvan, {\it Generalizations of Dyson's rank}, Ph.D. thesis, Pennsylvania State Univ., 1986. 


\bibitem{Garvan2}

F. G. Garvan, {\it New combinatorial interpretations of Ramanujan's partition congruences mod $5$, $7$, and $11$}, Trans. Amer. Math. Soc. {\bf 305} (1988), pp. 47--77.

\bibitem{Garvan3} F.G. Garvan, {\it Biranks for partitions into 2 colors}, arXiv preprint arXiv:0909.4892 (2009).


\bibitem{GSZ}
V. Gritsenko, N. Skoruppa, and D. Zagier, {\it Theta blocks}, preprint, 2019. 

\bibitem{HL}
P. Hammond and R. Lewis, Congruences in ordered pairs of partitions, Int. J. Math. Math.
Sci. 2004, nos. 45-48, 2509-2512.

\bibitem{MacDonald}
I. G. Macdonald {\it Affine root systems and Dedekind's $\eta$-function}, Invent. Math. (1972) {\bf 15}, 91--143,

\bibitem{Ramanujan}
S. Ramanujan, {\it Congruence properties of partitions}, Mathematische Zeitschrift. {\bf 9} (1921) no. 1--2, 147--153.

\bibitem{S}
N.-P~Skoruppa, {\it Jacobi forms of critical weight and Weil representations}, Modular forms on Schiermonnikoog, 239--266. Cambridge Univ. Press, Cambridge, 2008.

\bibitem{Zagier}
D. Zagier, {\it Ramanujan's mock theta functions and their applications [d'apr\`es Zwegers and Bringmann-Ono]},
S\'eminaire Bourbaki, 60\`eme ann\'ee, 2007-2008, no
986, Ast\'erisque {\bf 326} (2009), Soc. Math. de France, 143--164.

\end{thebibliography}
\end{document}